\renewcommand{\ge}{\geqslant}
\renewcommand{\le}{\leqslant}
\let\op=\llbracket
\let\cl=\rrbracket
\def\pv#1{\ensuremath{\mathsf{#1}}}
\def\Om#1#2{\ensuremath{\overline{\Omega}_{#1}{\pv{#2}}}}
\newcommand\malcev{%
\mathbin{\hbox{$\bigcirc$\kern-9pt\raise1.2pt\hbox{\scriptsize$m$}\,}}}
\newcommand\smalcev{%
\mathbin{\hbox{$\bigcirc$\kern-8pt\raise1.2pt\hbox{\tiny$m$}\,}}}
\def\Pol#1{\ensuremath{\mathop{\mathrm{Pol}}\pv{#1}}}
\let\cal=\mathcal
\def\Cl#1{\ensuremath{\cal#1}}
\newtheorem{Thm}{Theorem}[section]
\newtheorem{Prop}[Thm]{Proposition}
\newtheorem{Lemma}[Thm]{Lemma}
\newtheorem{Cor}[Thm]{Corollary}
\newtheorem{Problem}[Thm]{Problem}
\newtheorem{Conjecture}[Thm]{Conjecture}
\theoremstyle{definition}
\newtheorem{Remark}[Thm]{Remark}
\title{On the insertion of $n$-powers}
\author{Jorge Almeida\affiliationmark{1}\thanks{Partial support by
    CMUP (UID/MAT/ 00144/2019) which is funded by FCT (Portugal) with
    national (MCTES) and European structural funds (FEDER) under the
    partnership agreement PT2020.} %
  \and Ond{\v r}ej Kl{\'i}ma\affiliationmark{2}\thanks{Supported by
    Grant 15-02862S of the Grant Agency of the Czech Republic.} }%
\affiliation{CMUP, Dep.\ Matem\'atica, Faculdade de Ci\^encias,
  Universidade do Porto, Portugal\\
  Dept.\ of Mathematics and Statistics, Masaryk University, Czech
  Republic}
\keywords{Regular language, polynomial closure, pseudovariety, finite
  ordered monoid, pseudoidentity, Burnside pseudovariety}
\begin{document}

\publicationdetails{21}{2019}{3}{5}{4072}
\maketitle
\begin{abstract}
  In algebraic terms, the insertion of $n$-powers in words may be
  modelled at the language level by considering the pseudovariety of
  ordered monoids defined by the inequality $1\le x^n$. We
  compare this pseudovariety with several other natural
  pseudovarieties of ordered monoids and of monoids associated with
  the Burnside pseudovariety of groups defined by the identity
  $x^n=1$. In particular, we are interested in determining the
  pseudovariety of monoids that it generates, which can be viewed as
  the problem of determining the Boolean closure of the class of
  regular languages closed under $n$-power insertions. We exhibit a
  simple upper bound and show that it satisfies all pseudoidentities
  which are provable from $1\le x^n$ in which both sides are regular
  elements with respect to the upper bound.
\end{abstract}

\makeatletter%
\@namedef{subjclassname@2010}{%
  \textup{2010} Mathematics Subject Classification}%
\makeatother



\maketitle

\section{Introduction}
\label{sec:intro}

Pseudovarieties of ordered monoids have been introduced in the theory
of finite semigroups as a tool that, via a generalization of
Eilenberg's correspondence and the syntactic monoid, provides a
classifier for classes of regular languages,
cf.~\cite{Pin:1995a,Pin:1997}. More generally than varieties of
languages, they classify the so-called positive varieties of
languages. As for the original version of Eilenberg's correspondence
for pseudovarieties of monoids, the extended version prompted
additional interest in studying pseudovarieties of ordered monoids,
particularly in the context of concatenation hierarchies of regular
languages, which provided the initial motivation for introducing them.

Even before pseudovarieties of ordered monoids were considered,
ordered monoids had already been shown to play a role in the theory of
finite semigroups. A notable instance is a direct algebraic proof of
the fact that every finite \Cl J-trivial monoid is a quotient of some
finite ordered monoid satisfying the inequality $1\le x$, see
\cite{Straubing&Therien:1988a}, a fact that turns out to be equivalent
to Simon's characterization of piecewise testable languages as those
whose syntactic monoid is a finite \Cl J-trivial monoid, see
\cite{Simon:1975}, one of the classical results that led to the
formulation of Eilenberg's correspondence, cf.~\cite{Eilenberg:1976}.
Note that the language counterpart of the pseudovariety of ordered
monoids defined by the inequality $1\le x$ is the class of regular
languages that are closed under inserting arbitrary words in each of
their elements.

Another important instance of an inequality of the form $1\le
x^\alpha$ is the weakest such inequality, namely $1\le x^\omega$. The
pseudovariety of monoids generated by the class of ordered monoids it
defines was the object of deep research in the 1980's which led to
many alternative descriptions, from block groups to power groups, as
well as the language counterpart given by the Boolean-polynomial
closure of the class of all group languages. A discussion of such
results, whose key ingredient is due to \cite{Ash:1991}, may be
found in~\cite{Henckell&Margolis&Pin&Rhodes:1991,Pin:1995}. Most
descriptions of that pseudovariety involve some construction on the
pseudovariety \pv G of all finite groups, such as power groups
(\pv{PG}), the semidirect product $\pv J*\pv G$, the Mal'cev product
$\pv J\malcev\pv G$, and block groups (\pv{BG}). The relationships
between such constructions starting from an arbitrary pseudovariety of
groups instead of the pseudovariety~\pv G have
been extensively studied by Auinger and Steinberg, see
\cite{Steinberg:1999f, %
  Steinberg:1999c, %
  Steinberg:1999a, %
  Auinger&Steinberg:2001b, %
  Auinger&Steinberg:2001a, %
  Auinger&Steinberg:2005c, %
  Auinger&Steinberg:2003a}. %
In particular, the situation is radically different from the
well-behaved case of~\pv G for the Burnside pseudovariety defined by
the identity $x^n=1$ for $n\ge2$.

The aim of our paper is to investigate the pseudovariety of ordered
monoids $\op 1\le x^n\cl$ defined by the inequality $1\le x^n$, which
is the algebraic counterpart of the positive variety of languages
closed under the insertion of $n$-powers. We are also interested in
the Boolean closure of that positive variety, for which decidability
of membership remains an open problem. It corresponds to the
pseudovariety of monoids generated by $\op 1\le x^n\cl$, which may be
viewed as an extension of the case $1\le x$ and a restriction of the
case $1\le x^\omega$ by bounding the exponent. We compare these
pseudovarieties with the classical constructions on the corresponding
Burnside pseudovariety, defined by $x^n=1$, and with the best upper
bound we have been able to find. This is the pseudovariety
$(\pv{BG})_n$ of block groups defined by the pseudoidentity
\begin{math}
  (xy^\omega z)^{\omega+1}=(xy^nz)^{\omega+1}.
\end{math}
We also propose an ordered version of the pseudoidentity proof scheme
introduced by the authors, see~\cite{Almeida&Klima:2017a}. Finally, we
show that if a pseudoidentity over~$(\pv{BG})_n$ whose sides are
regular pseudowords may be proved from $1\le x^n$, then it is trivial,
which gives some evidence towards our upper bound being optimal.

\section{Background}
\label{sec:background}

We assume the reader is familiar with the basics of finite semigroup
theory, particularly with pseudovarieties, pseudoidentities, and
relatively free profinite monoids. For details, see
\cite{Pin:1986;bk, %
  Almeida:1994a, %
  Almeida:2003cshort, %
  Rhodes&Steinberg:2009qt, %
  Almeida&Costa:2015hb}. %
In particular, recall that a \emph{profinite monoid} is a compact
zero-dimensional monoid. For a pseudovariety \pv V of monoids, the
pro-\pv V monoid freely generated by a set $A$ is denoted \Om AV. By a
\emph{\pv V-pseudoidentity} we mean a formal equality $u=v$ with
$u,v\in\Om AV$ for some finite set $A$. For a set $\Sigma$ of \pv
V-pseudoidentities, the class of all monoids from~\pv V satisfying all
pseudoidentities from~$\Sigma$ is denoted $\op\Sigma\cl$. Most often,
we consider \pv M-pseudoidentities, where \pv M is the pseudovariety
of all finite monoids. Elements of~\Om AM are sometimes called
\emph{pseudowords}.

For an element $s$ of~a profinite monoid,
$s^\omega$ denotes the unique idempotent in the closed subsemigroup
$\overline{\langle s\rangle}$ generated by~$s$, while $s^{\omega-1}$
denotes the inverse of $s^{\omega+1}=s^\omega s$ in the unique maximal
subgroup of~$\overline{\langle s\rangle}$. For a nonzero integer~$k$,
$s^{\omega+k}$ stands for $(s^{|k|})^{\omega+\varepsilon}$, where
$\varepsilon$ is the sign of~$k$.

By an \emph{ordered monoid} we mean a monoid with a partial order that
is compatible with the monoid operation, cf.~\cite{Pin:1997}. A
pseudovariety of ordered monoids is a nonempty class of such
structures that is closed under taking images under order-preserving
homomorphisms, subsemigroups under the induced order, and finite
direct products. The theory of pseudovarieties of ordered monoids is a
natural extension of the unordered case. For a pseudovariety \pv V of
ordered monoids, forgetting the order of its elements, we may consider
the pseudovariety of monoids $\langle\pv V\rangle$ it generates. On
the other hand, an unordered monoid may be viewed as an ordered one
under the trivial partial order, given by equality in the monoid. For
a pseudovariety of monoids \pv V, the pseudovariety of ordered monoids
$\pv V'$ that the members of~\pv V generate when ordered trivially
consists precisely of all monoids in~\pv V under all possible
compatible orders. Thus, it is natural to identify \pv V and $\pv V'$
and we do so it freely throughout this paper.

By a \emph{(pseudo)inequality} we mean a formal inequality $u\le v$
with $u,v\in\Om AM$ for some finite set~$A$. The class of all finite
ordered monoids satisfying a given set $\Sigma$ of inequalities is
also denoted $\op\Sigma\cl$.

For a pseudovariety \pv V of ordered monoids, there is also a pro-\pv
V monoid freely generated by a set $A$, denoted \Om AV which, as a
topological monoid, coincides with $\Om A{}\langle\pv V\rangle$. It
may be viewed as the quotient of~\Om AM by the (compatible closed)
quasiorder $\le$ defined by $u\le v$ when \pv V satisfies the
inequality $u\le v$.

\textit{Mutatis mutandis}, instead of monoids one may consider
semigroups. For a pseudovariety \pv V of monoids, we usually also
denote by \pv V the pseudovariety of semigroups it generates.
Occasionally, we refer to pseudovarieties of ordered semigroups.

There are several pseudovarieties that play an important role in this
paper. Among them are the pseudovariety \pv J of all finite \Cl
J-trivial monoids, the pseudovariety \pv A of all finite aperiodic
monoids, and the pseudovariety \pv{Sl} of all finite semilattices.
Some operators on pseudovarieties are also relevant. For a
pseudovariety \pv V of semigroups, \pv{EV} denotes the pseudovariety
of all finite monoids whose idempotents generate a semigroup from~\pv
V, \pv{DV} denotes the pseudovariety of all finite semigroups whose
regular \Cl D-classes are subsemigroups from~\pv V, and \pv{PV}
denotes the pseudovariety generated by all power semigroups of the
semigroups from~\pv V. For a pseudovariety of groups \pv H, \pv{BH}
and $\overline{\pv H}$ denote the pseudovarieties of all finite
monoids whose blocks are groups from~\pv H and whose subgroups belong
to~\pv H, respectively. When \pv V is a pseudovariety of ordered
semigroups and \pv W is a pseudovariety of monoids, the Mal'cev
product $\pv V\malcev\pv W$ consists of all finite ordered monoids for
which there is a relational morphism into a monoid from~\pv W such
that the preimage of each idempotent is a member of~\pv V.

Given a language $L$ over a finite alphabet $A$, meaning a subset of
the free monoid $A^*$, the associated \emph{syntactic quasiorder} is
the quasiorder on $A^*$ defined by $u\le v$ if, for all $x,y\in A^*$,
$xuy\in L$ implies $xvy\in L$.\footnote{In the literature, one often
  finds the syntactic quasiorder defined to be the reverse quasiorder
  (see \cite{Almeida&Cano&Klima&Pin:2015} for historical details).}
The \emph{syntactic ordered monoid} of $L$, denoted
$\mathrm{Synt}(L)$, is the quotient ordered monoid by the quasiorder
$\le$, meaning the quotient of $A^*$ by the congruence
${\le}\cap{\ge}$, endowed with the partial order induced by~$\le$. By
the \emph{syntactic monoid} of~$L$ we mean the same monoid
$\mathrm{Synt}(L)$ but with no reference to the order.

\section{Preliminary results}
\label{sec:prelim-results}

Consider the pseudovarieties $\pv J^+=\op 1\le x\cl$ and
$\pv{LI}^+=\op x^\omega\le x^\omega yx^\omega\cl$, respectively of
ordered monoids and of ordered semigroups. By
\cite[Theorem~5.9]{Pin&Weil:1994c}, for a pseudovariety of monoids \pv
V, the polynomial closure\footnote{meaning the pseudovariety of
  ordered monoids \Pol V corresponding to the positive variety of
  languages generated by the class of languages which, for a finite
  alphabet $A$, consists of the products of the form $L_0a_1L_1\cdots
  a_nL_n$, where the $a_i\in A$ and the $L_i$ are \pv V-languages.}
of~\pv V is the pseudovariety of ordered monoids
\begin{math}
  \Pol V=\pv{LI}^+\malcev\pv V.
\end{math}
As was proved by~\cite{Pin&Weil:1996a}, $\pv{LI}^+\malcev\pv V$ is
defined by the inequalities of the form
\begin{math}
  u^\omega\le u^\omega vu^\omega
\end{math}
such that the pseudoidentities
\begin{math}
  u=v=v^2
\end{math}
hold in~\pv V. In particular, in case \pv V~is a pseudovariety of
groups, one may take $u=1$, so that the defining inequalities for \Pol
V are reduced to $ 1\le v$ whenever \pv V satisfies $v=1$. This
observation proves the following statement.

\begin{Lemma}[{\cite[Corollary~3.1]{Steinberg:1999f}}]
  \label{l:PolH}
  If \pv H is a pseudovariety of groups, then
  \begin{math}
    \Pol H=\pv J^+\malcev\pv H.
  \end{math}
  \qed
\end{Lemma}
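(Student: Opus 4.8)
The plan is to show that $\Pol H$ and $\pv J^+\malcev\pv H$ are defined by one and the same set of inequalities. For $\Pol H$ this is already the content of the observation preceding the statement: combining Theorem~5.9 of \cite{Pin&Weil:1994c} with the description of \cite{Pin&Weil:1996a}, and using that a group has a single idempotent, one obtains that $\Pol H$ is defined by the inequalities $1\le v$ ranging over all pseudowords $v$ with $\pv H\models v=1$. It therefore suffices to check that $\pv J^+\malcev\pv H$ is defined by exactly the same inequalities.

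The easy half is $\pv J^+\malcev\pv H\subseteq\Pol H$. Here I would first note that, as pseudovarieties of ordered semigroups, $\pv J^+\subseteq\pv{LI}^+$: any member of $\pv J^+$ satisfies $1\le y$, whence $x^\omega=x^\omega\cdot1\cdot x^\omega\le x^\omega y x^\omega$, so it satisfies the defining inequality of $\pv{LI}^+$. Since a relational morphism witnessing membership in $\pv V\malcev\pv W$ continues to witness membership when $\pv V$ is enlarged, the Mal'cev product is monotone in its first argument, and hence $\pv J^+\malcev\pv H\subseteq\pv{LI}^+\malcev\pv H=\Pol H$.

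For the reverse inclusion --- the real point --- I would apply the general description of Mal'cev products by inequalities from \cite{Pin&Weil:1996a}, of which the quoted statement for $\pv{LI}^+$ is the special case. Since $\pv J^+=\op1\le x\cl$ is defined by the single inequality $1\le x$, that description presents $\pv J^+\malcev\pv H$ as the class defined by the inequalities $1\le w$ obtained by substituting for $x$ a pseudoword $w$ that maps to an idempotent over $\pv H$, i.e.\ with $\pv H\models w=w^2$ (with a single variable, the ``common idempotent'' condition of the general theorem is just this). Because $\pv H$ consists of groups, $w=w^2$ holds over $\pv H$ exactly when $w=1$ holds over $\pv H$; thus the defining inequalities are precisely $1\le w$ with $\pv H\models w=1$, matching those of $\Pol H$. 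The main obstacle is simply invoking the general Mal'cev-product identity theorem in its correct form and checking that, in the group case, the condition ``$w$ idempotent over $\pv H$'' collapses to ``$w=1$ over $\pv H$''; once this is done the two inequality sets coincide, equality follows, and the inclusion of the previous paragraph is subsumed.
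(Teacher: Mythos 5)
Your proposal is correct and follows essentially the same route as the paper: the paper's proof is precisely the observation that, by the Pin--Weil basis theorem for Mal'cev products, the defining inequalities of $\Pol H=\pv{LI}^+\malcev\pv H$ collapse, over a pseudovariety of groups, to $1\le v$ with $\pv H\models v=1$, which is also the basis one gets for $\pv J^+\malcev\pv H$. Your extra remarks (the inclusion $\pv J^+\subseteq\pv{LI}^+$ and monotonicity of the Mal'cev product) are a harmless redundancy, as you yourself note.
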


The following result allows us to separate two pseudovarieties of
interest.

\begin{Lemma}
  \label{l:separate-J+mHn-from-1LEQxn}
  For $n\ge2$, $\op 1\le x^n\cl$ is not contained in
  \begin{math}
    \pv J^+\malcev\op x^n=1\cl.
  \end{math}
\end{Lemma}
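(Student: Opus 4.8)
The plan is to use Lemma~\ref{l:PolH} to rewrite the right-hand side as a polynomial closure and then separate the two classes by a single pseudoinequality. By Lemma~\ref{l:PolH}, $\pv J^+\malcev\op x^n=1\cl$ is the polynomial closure of the group pseudovariety $\op x^n=1\cl$; by the description of polynomial closures of group pseudovarieties recalled just before that lemma, it is defined by the inequalities $1\le v$ ranging over those $v$ for which $\op x^n=1\cl\models v=1$. Consequently, to establish the non-containment it suffices to produce a word $v\in A^*$ such that (i)~$\op x^n=1\cl\models v=1$, and yet (ii)~$\op 1\le x^n\cl\not\models 1\le v$. The whole proof thus reduces to exhibiting one such word.

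For condition (ii) I would first record the combinatorial meaning of $1\le v$ over $\op 1\le x^n\cl$: for a word $v$ one has $\op 1\le x^n\cl\models 1\le v$ if and only if $v$ can be reduced to the empty word by repeatedly deleting factors of the form $t^n$ (equivalently, $v$ is obtainable from the empty word by inserting $n$-th powers). The implication ``reducible $\Rightarrow 1\le v$'' is immediate, since each insertion $\alpha\beta\le\alpha t^n\beta$ is an instance of $1\le x^n$. For the converse I would build an explicit finite witness: with $k=|v|$, let $M$ be the Rees quotient of $A^*$ by the ideal of words of length greater than $k$ (so the elements are the words of length at most $k$, all longer words being collapsed to an absorbing zero), endowed with the smallest stable order for which $1\le t^n$ holds for all $t$. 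Then $M\in\op 1\le x^n\cl$ by construction, and any chain witnessing $1\le v$ in $M$ cannot pass through the zero (nothing above the zero differs from it in the generated order); hence it reads as a sequence of genuine $n$-power insertions producing $v$, and its last nontrivial step exhibits a factor of $v$ of the form $t^n$. Therefore, if $v$ has no nonempty factor of the form $t^n$, then $1\le v$ already fails in $M$, which gives (ii).

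It now suffices to find, for each $n\ge2$, a word $v$ that represents the identity in every group of $\op x^n=1\cl$ but contains no nonempty factor of the form $t^n$. For $n=2$ the relevant groups are the elementary abelian $2$-groups, so any word with even content in each letter is trivial; the word $v=xyzyxz$ has even content and no square factor, hence is not reducible. By the reduction above this already proves the lemma for $n=2$.

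The step I expect to be the main obstacle is the construction of such a word for $n\ge3$. Here the groups of $\op x^n=1\cl$ are genuinely non-abelian, so triviality can no longer be read off the content but must come from the Burnside relations themselves; moreover the standard ways of forcing a positive word to represent the identity (for instance a telescoping product $w\,\overline w$, where $\overline w$ is obtained by reversing $w$ and replacing each $a$ by $a^{n-1}=a^{-1}$) inevitably create an $n$-th-power factor at the junction. The delicate point is to realise the identity of the relatively free Burnside group by a positive word in which the cancellation is sufficiently spread out that no contiguous $n$-th power survives. Note that this is exactly what is needed: since deleting $n$-th powers preserves the value in the group, any Burnside-trivial word that is not reducible reduces to a \emph{nonempty} Burnside-trivial word with no $n$-th-power factor, so the existence of a separating word is equivalent to the existence of such a factor-free word. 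Producing one uniformly in $n$, and verifying its triviality in all (including non-abelian) groups of exponent $n$, is the part that I expect to require genuine combinatorial work.
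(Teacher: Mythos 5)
Your reduction is sound and is essentially the one the paper uses: by the description of $\Pol{V}$ for a group pseudovariety recalled just before Lemma~\ref{l:PolH}, it suffices to exhibit a word $v$ with $\op x^n=1\cl\models v=1$ that contains no nonempty factor of the form $t^n$ (and hence cannot be obtained from the empty word by inserting $n$-powers). Your finite witness for point~(ii) --- the Rees quotient of $A^*$ by the ideal of words of length greater than $|v|$, equipped with the generated stable order --- is correct, and is a legitimate alternative to the paper's witness, namely the syntactic ordered monoid of the cofinite language $\{pu^nq: u\in A^+,\ p,q\in A^*\}\cup\{w:|w|\ge(n+1)^2\}$; both serve exactly the same purpose. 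Your $n=2$ word $xyzyxz$ works (the paper uses $xyzxzy$).

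The genuine gap is precisely where you flag it: for $n\ge3$ you never produce the separating word, and without it the lemma is not proved. The paper's choice is $v=(y^{n-1}x)^{n-1}(xy)^{n-1}x^2$. Its triviality in $\op x^n=1\cl$ is not a deep Burnside phenomenon but elementary telescoping: from $u^n=1$ one gets $u^{n-1}=u^{-1}$, so $(y^{n-1}x)^{n-1}=x^{-1}y$ and $(xy)^{n-1}=y^{-1}x^{-1}$, and the product collapses as $x^{-1}y\cdot y^{-1}x^{-1}\cdot x^2=1$. The remaining combinatorial check --- that $v$ (indeed even $t\,v\,t^{n-1}$, which is what the paper's language-based argument needs) contains no factor $u^n$ with $u\ne1$ --- is short. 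So you correctly isolated the missing ingredient but overestimate its difficulty: no knowledge of free Burnside groups is required, only the formal consequence $u^{n-1}=u^{-1}$ of $u^n=1$, applied to the two blocks $y^{n-1}x$ and $xy$ so that the needed inverses are realized by positive words whose concatenation creates no $n$-th power.
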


\begin{proof}
  Consider first the case where $n\ge3$. In the Burnside
  pseudovariety~$\op x^n=1\cl$, we have
  \begin{math}
    (y^{n-1}x)^{n-1}(xy)^{n-1}x^2=x^{n-1}yy^{n-1}x^{n-1}x^2=1.
  \end{math}
  Hence,
  the inequality
  \begin{math}
    1\le (y^{n-1}x)^{n-1}(xy)^{n-1}x^2
  \end{math}
  holds in the Mal'cev product
  \begin{math}
    \pv J^+\malcev\op x^n=1\cl.
  \end{math}
  Let $L$ be the language over the alphabet $A=\{x,y,t\}$
  given by
  \begin{displaymath}
    L=\{pu^nq\in A^*: u\in A^+,\,p,q\in A^*\}
    \cup\{w\in A^*:|w|\ge(n+1)^2\}.
  \end{displaymath}
  Then, $L$ is a cofinite language, whence it is regular. Since $u^n$
  appears in~$L$ in every context, the syntactic ordered monoid
  $\mathrm{Synt}(L)$ satisfies the inequality $1\le x^n$. Note also
  that
  \begin{math}
    t\cdot1\cdot t^{n-1}
  \end{math}
  belongs to~$L$ but, since $n\ge3$,
  \begin{math}
    t\cdot(y^{n-1}x)^{n-1}(xy)^{n-1}x^2\cdot t^{n-1}
  \end{math}
  does not as it is a word of length
  \begin{math}
    (n-1)(n+3)+3=n^2+2n
  \end{math}
  which does not contain any factor of the form $u^n$ with $u\ne1$.
  Hence, $\mathrm{Synt}(L)$ fails the inequality
  \begin{math}
    1\le (y^{n-1}x)^{n-1}(xy)^{n-1}x^2.
  \end{math}

  In case $n=2$, we consider instead the inequality $1\le xyzxzy$. Let
  $A=\{x,y,z,t\}$ and consider the language
  \begin{displaymath}
    L=\{pu^2q\in A^*: u\in A^+,\,p,q\in A^*\}
    \cup\{w\in A^*:|w|\ge 9\}.
  \end{displaymath}
  The argument proceeds as in the previous case, where the essential
  ingredient that needs to be noted is that the word $txyzxzyt$ has no
  square factor.
\end{proof}

\begin{Cor}
  \label{c:1LEQxn-non-PolV}
  For $n\ge2$, the pseudovariety $\op1\le x^n\cl$ is not of the form
  \Pol V for any pseudovariety of monoids~\pv V.
\end{Cor}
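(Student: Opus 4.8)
The plan is to argue by contradiction. Suppose $\op 1\le x^n\cl=\Pol V$ for some pseudovariety of monoids~\pv V. I will show that \pv V is then forced to be a pseudovariety of groups of exponent dividing~$n$, i.e.\ $\pv V\subseteq\op x^n=1\cl$, and that this makes $\op 1\le x^n\cl$ contained in $\pv J^+\malcev\op x^n=1\cl$, contradicting Lemma~\ref{l:separate-J+mHn-from-1LEQxn}.

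First I would use that the polynomial closure is extensive, that is $\pv V\subseteq\Pol V$. Indeed, writing $\Pol V=\pv{LI}^+\malcev\pv V$ as recalled above, any $M\in\pv V$ is witnessed to lie in the Mal'cev product by the identity relational morphism $M\to M$, since then the preimage of each idempotent is a trivial ordered monoid and hence belongs to~$\pv{LI}^+$. Thus every $M\in\pv V$, viewed as a \emph{trivially} ordered monoid, lies in $\Pol V=\op 1\le x^n\cl$ and therefore satisfies the defining inequality $1\le x^n$. The key observation is that under the trivial order the relation $\le$ is equality, so $1\le x^n$ collapses to the identity $x^n=1$. Since a finite monoid satisfying $x^n=1$ has $1$ as its only idempotent (any idempotent $e$ gives $e=e^n=1$) and is therefore a group, I conclude that every member of \pv V is a group of exponent dividing~$n$, that is $\pv V\subseteq\op x^n=1\cl$.

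Finally I would invoke monotonicity of $\Pol{}$ in its argument together with Lemma~\ref{l:PolH}, applied to the pseudovariety of groups $\op x^n=1\cl$, to obtain $\op 1\le x^n\cl=\Pol V\subseteq\Pol{\op x^n=1\cl}=\pv J^+\malcev\op x^n=1\cl$. This contradicts Lemma~\ref{l:separate-J+mHn-from-1LEQxn}, which asserts precisely that $\op 1\le x^n\cl$ is \emph{not} contained in $\pv J^+\malcev\op x^n=1\cl$; hence no such \pv V can exist. I do not expect a serious obstacle in this argument: it is short once the right viewpoint is adopted. The only points that genuinely require care are the identification of a pseudovariety of monoids with the corresponding pseudovariety of (in particular, trivially) ordered monoids, which is what turns $1\le x^n$ into $x^n=1$ and pins \pv V down to a group pseudovariety, together with the routine facts that $\Pol{}$ is extensive and monotone, both of which are available from the background recalled above.
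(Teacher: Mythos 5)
Your proof is correct and follows essentially the same route as the paper: extensivity of $\Pol{}$ forces $\pv V$ to satisfy $1\le x^n$, hence $x^n=1$, so $\pv V$ is a group pseudovariety, and then Lemma~\ref{l:PolH} together with Lemma~\ref{l:separate-J+mHn-from-1LEQxn} yields the contradiction. The only (cosmetic) difference is that you pass to $\pv J^+\malcev\op x^n=1\cl$ via monotonicity of $\Pol{}$ so as to quote Lemma~\ref{l:separate-J+mHn-from-1LEQxn} verbatim, whereas the paper applies Lemma~\ref{l:PolH} to $\pv V$ itself and uses the failing inequality exhibited in that lemma's proof.
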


\begin{proof}
  Let \pv V be a pseudovariety of monoids and suppose that
  \begin{math}
    \op1\le x^n\cl=\Pol V.
  \end{math}
  Since
  \begin{math}
    \pv V\subseteq\Pol V,
  \end{math}
  it follows that \pv V satisfies the inequality $1\le x^n$, whence
  also the identity $x^n=1$ so that, in particular, \pv V must be a
  pseudovariety of groups. By Lemma~\ref{l:PolH}, we deduce that
  \begin{math}
    \Pol V=\pv J^+\malcev\pv V.
  \end{math}
  By Lemma~\ref{l:separate-J+mHn-from-1LEQxn},
  $\pv J^+\malcev\pv V$ satisfies a pseudoidentity that fails in
  $\op1\le x^n\cl$, which entails
  \begin{math}
    \op1\le x^n\cl\nsubseteq\Pol V,
  \end{math}
  in contradiction with the initial assumption.
\end{proof}

In contrast with Corollary~\ref{c:1LEQxn-non-PolV}, for the
pseudovariety \pv G of all finite groups, \cite[Theorem~5.9
and~2.7]{Pin&Weil:1994c} yield the equalities
\begin{math}
  \Pol G=\pv{LI}^+\malcev\pv G=\op 1\le x^\omega\cl.
\end{math}
It is important to notice that the known proof of the latter equality
depends on a deep theorem of~\cite{Ash:1991}. On the other hand, for
$n=1$, for the trivial pseudovariety
\begin{math}
  \pv I=\op x=1\cl,
\end{math}
we have
\begin{math}
  \Pol I=\pv J^+\malcev\pv I=\pv J^+=\op 1\le x\cl.
\end{math}

Next, we recall some related results.

\begin{Thm}[\cite{Higgins&Margolis:1999}]
  \label{t:Higgins-Margolis}
  The pseudovariety \pv G is the only pseudovariety of groups \pv H
  such that $\pv A\cap\pv{ESl}$ is contained in $\pv{DA}\malcev\pv H$.
\end{Thm}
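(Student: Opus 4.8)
The statement combines two assertions: an \emph{existence} part, that $\pv G$ itself satisfies $\pv A\cap\pv{ESl}\subseteq\pv{DA}\malcev\pv G$, and a \emph{uniqueness} part, that no proper pseudovariety of groups does. I would establish the two separately, the first by a kernel computation and the second by a witness construction.

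For the existence part the plan is to invoke the theorem of \cite{Ash:1991}. For a finite monoid $M$ it provides a concrete description of the $\pv G$-kernel $K_{\pv G}(M)$ --- the smallest subsemigroup containing the idempotents and closed under weak conjugation --- together with a single relational morphism of $M$ into a finite group realizing $K_{\pv G}(M)$ as the preimage of~$1$. Since $\pv{DA}$ is closed under subsemigroups, this yields the equivalence $M\in\pv{DA}\malcev\pv G$ if and only if $K_{\pv G}(M)\in\pv{DA}$. It therefore suffices to check that $K_{\pv G}(M)\in\pv{DA}$ whenever $M\in\pv A\cap\pv{ESl}$. Here the idempotents of~$M$ commute, so they already form a semilattice, and $K_{\pv G}(M)$ is aperiodic as a subsemigroup of the aperiodic~$M$; what remains is the structural lemma that the weak-conjugation closure of a semilattice inside an aperiodic monoid with commuting idempotents satisfies the pseudoidentity $(uv)^\omega u(uv)^\omega=(uv)^\omega$ defining~$\pv{DA}$. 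I would verify this directly, using that every idempotent of the closure is an idempotent of~$M$ and hence commutes with all the others.

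For the uniqueness part I would argue contrapositively, showing that if $\pv A\cap\pv{ESl}\subseteq\pv{DA}\malcev\pv H$ then $\pv H$ contains every finite group, so $\pv H=\pv G$. Fix a finite group~$G$. The goal is to build a single monoid $M_G\in\pv A\cap\pv{ESl}$ with the property that every relational morphism $\tau\colon M_G\to K$ into a finite group whose kernel $\tau^{-1}(1)$ lies in~$\pv{DA}$ forces $G$ to divide~$K$. Granting such an $M_G$, the hypothesis places $M_G$ in $\pv{DA}\malcev\pv H$, so some $K\in\pv H$ admits such a morphism; then $G$ divides~$K$, and as $\pv H$ is closed under division we get $G\in\pv H$. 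Since $G$ is arbitrary, this gives $\pv H=\pv G$.

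The heart of the argument, and the step I expect to be the main obstacle, is the construction and verification of $M_G$. The essential difficulty is that $G$ cannot appear in $M_G$ transparently: an aperiodic monoid has only trivial subgroups and a trivial maximal group image, so $G$ can be recovered only as \emph{pointlike} (inevitable) data, and it is precisely the constraint $\tau^{-1}(1)\in\pv{DA}$ that rules out the degenerate relational morphism collapsing everything to the trivial group. I would therefore search for $M_G$ among combinatorial $0$-simple or Brandt-like semigroups whose multiplicative skeleton is patterned on~$G$, and prove the required lower bound through the theory of $\pv H$-inevitable substructures: the Cayley graph of~$G$ is inevitable for~$\pv G$ but, when $G\notin\pv H$, fails to be inevitable for~$\pv H$, and this discrepancy is what should make every $\pv{DA}$-constrained group target of~$M_G$ reproduce~$G$. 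The secondary technical point is the $\pv{DA}$ structural lemma underlying the existence part.
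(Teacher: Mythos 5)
First, a caveat: the paper does not prove this statement at all --- it is quoted verbatim from Higgins and Margolis (1999) and used as a black box --- so there is no internal proof to compare you against; I can only judge your attempt on its own terms. Your existence half is essentially right, and in fact easier than you make it sound. By Ash's theorem the $\pv G$-kernel $K_{\pv G}(M)$ is the weak-conjugacy closure $D(M)$ of the idempotents, and when the idempotents of $M$ commute every weak conjugate of an idempotent is again an idempotent: for $\bar a a\bar a=\bar a$ and $e^2=e$ one has
\begin{math}
  (\bar a e a)^2=\bar a e(a\bar a)ea=\bar a(a\bar a)eea=(\bar a a\bar a)ea=\bar a ea,
\end{math}
and dually for $ae\bar a$. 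Hence $D(M)$ consists of commuting idempotents, i.e.\ it is a semilattice, and one gets the stronger inclusion $\pv{ESl}\subseteq\pv{Sl}\malcev\pv G\subseteq\pv{DA}\malcev\pv G$ with no use of aperiodicity; your proposed structural lemma about the $\pv{DA}$ pseudoidentity is not needed (and the identity you quote is not the standard basis for $\pv{DA}$ in any case).

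The genuine gap is the uniqueness half, which is where the entire content of the theorem lies, and your proposal does not prove it. You correctly reduce it to: for every finite group $G\notin\pv H$, exhibit $M_G\in\pv A\cap\pv{ESl}$ with $K_{\pv H}(M_G)\notin\pv{DA}$. But you supply neither the construction of $M_G$ nor the lower bound on its $\pv H$-kernel, and the latter is the real difficulty: unlike the $\pv G$-kernel, the $\pv H$-kernel of a finite monoid relative to a proper pseudovariety of groups admits no general description (its computability is a notoriously hard problem), so one must argue by hand that specific non-idempotent regular elements of $M_G$ lie in $\tau^{-1}(1)$ for \emph{every} relational morphism $\tau$ into a group of $\pv H$ --- forcing, say, a copy of the combinatorial Brandt semigroup $B_2$ into the kernel and thereby expelling it from $\pv{DA}$. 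That forcing argument has to exploit a concrete relation that holds in the relatively free pro-$\pv H$ group but fails in $G$; invoking ``inevitable substructures'' and Cayley graphs identifies the right territory but, as written, the uniqueness direction is a restatement of the goal rather than an argument. Until $M_G$ is written down and its $\pv H$-pointlike pairs are actually certified, the proof is incomplete.
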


The following is an immediate application of the preceding theorem
which has already been observed in~\cite[Proposition~13]{Steinberg:1999a}.

\begin{Cor}
  \label{c:JmH-vs-BH}
  For a pseudovariety of groups \pv H, the equality
  \begin{math}
    \pv J\malcev\pv H=\pv{BH}
  \end{math}
  holds if and only if $\pv H=\pv G$.
\end{Cor}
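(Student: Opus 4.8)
The plan is to prove the two implications separately. The reverse implication ($\pv H=\pv G\Rightarrow\pv J\malcev\pv H=\pv{BH}$) is the classical fact, while the direct implication (the equality forces $\pv H=\pv G$) is the announced application of Theorem~\ref{t:Higgins-Margolis}.

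For the reverse implication, I would simply invoke the classical equality $\pv J\malcev\pv G=\pv{BG}$, one of the several coincident descriptions of the pseudovariety associated with $1\le x^\omega$ recalled in the introduction (ultimately resting on the theorem of Ash); see \cite{Henckell&Margolis&Pin&Rhodes:1991,Pin:1995}. No further work is needed here.

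For the direct implication, suppose $\pv J\malcev\pv H=\pv{BH}$. The idea is to produce the hypothesis of the Higgins--Margolis theorem, namely the inclusion $\pv A\cap\pv{ESl}\subseteq\pv{DA}\malcev\pv H$, and then read off $\pv H=\pv G$. To this end I would establish two auxiliary inclusions. First, $\pv A\cap\pv{ESl}\subseteq\pv{BH}$: a monoid $M$ in $\pv A\cap\pv{ESl}$ is aperiodic, so all its subgroups are trivial and hence lie in~\pv H, and its idempotents commute. From commutativity of idempotents one checks that any two \Cl R-related (resp.\ \Cl L-related) idempotents must coincide, so each \Cl R-class and each \Cl L-class contains at most one idempotent; this is exactly the block-group condition, and the blocks of $M$ are then trivial groups, whence $M\in\pv{BH}$. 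Second, since $\pv J\subseteq\pv{DA}$ (in a \Cl J-trivial monoid every regular \Cl D-class is a single idempotent, hence an aperiodic subsemigroup), monotonicity of the Mal'cev product in its first argument yields $\pv J\malcev\pv H\subseteq\pv{DA}\malcev\pv H$. Chaining these with the hypothesis gives
\begin{math}
  \pv A\cap\pv{ESl}\subseteq\pv{BH}=\pv J\malcev\pv H\subseteq\pv{DA}\malcev\pv H,
\end{math}
so Theorem~\ref{t:Higgins-Margolis} forces $\pv H=\pv G$.

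The verifications involved are routine, and I do not expect a genuine obstacle: the only point requiring a little care is the block-group argument, i.e.\ that commuting idempotents force \Cl R- and \Cl L-related idempotents to be equal. All the substance of the statement is packaged either in Theorem~\ref{t:Higgins-Margolis} (for the direct implication) or in the classical identification $\pv J\malcev\pv G=\pv{BG}$ (for the reverse one), which is why the corollary is indeed immediate once those are in hand.
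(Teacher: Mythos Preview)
Your proposal is correct and follows essentially the same approach as the paper: invoke the classical equality $\pv J\malcev\pv G=\pv{BG}$ for one direction, and for the other chain $\pv A\cap\pv{ESl}\subseteq\pv{BH}=\pv J\malcev\pv H\subseteq\pv{DA}\malcev\pv H$ and apply Theorem~\ref{t:Higgins-Margolis}. The only cosmetic difference is that the paper justifies $\pv A\cap\pv{ESl}\subseteq\pv{BH}$ via the identity $\pv{BH}=\pv{EJ}\cap\overline{\pv H}$ rather than your direct block-group argument, and it cites Margolis--Pin for $\pv J\malcev\pv G=\pv{BG}$ (this particular equality does not actually need Ash's theorem).
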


\begin{proof}
  The equalities
  \begin{math}
    \pv J\malcev\pv G=\pv{BG}=\pv{EJ}
  \end{math}
  are well-known
  \cite[Theorem~4.7]{Margolis&Pin:1985}, see also
  \cite[Proposition~7.4]{Pin:1995}. For the converse, it suffices to
  observe that
  \begin{math}
    \pv{BH}=\pv{EJ}\cap\overline{\pv H}\supseteq\pv{ESl}\cap\pv A
  \end{math}
  while
  \begin{math}
    \pv J\malcev\pv H\subseteq\pv{DA}\malcev\pv H
  \end{math}
  and apply
  Theorem~\ref{t:Higgins-Margolis} to conclude that, if %
  \begin{math}
    \pv{BH}\subseteq\pv J\malcev\pv H,
  \end{math}
  then $\pv H=\pv G$.
\end{proof}

The following theorem summarizes several results
of~\cite{Steinberg:1999f} related with our present purposes. In it,
a pseudovariety of groups \pv H is said to be \emph{arborescent} if it
satisfies the equality
\begin{math}
  (\pv H\cap\pv{Ab})*\pv H=\pv H,
\end{math}
where \pv{Ab} is the pseudovariety of all finite Abelian
groups; the adjective arborescent comes from tree-like homological
properties of the Cayley graphs of the free pro-\pv H groups, see
\cite{Almeida&Weil:1994c} and also
\cite[Theorem~2.5.3]{Ribes:2017bk}.

\begin{Thm}
  \label{t:Steinberg-IJAC10}
  Let \pv H be a pseudovariety of groups.
  \begin{enumerate}
  \item
    \begin{math}
      \langle\pv J^+*\pv V\rangle=\pv J*\pv V
    \end{math}
    for every pseudovariety of monoids \pv V
    \cite[Proposition~2.2]{Steinberg:1999f}.
  \item
    \begin{math}
      \pv J^+\malcev\pv H\subseteq\pv J*\pv H
    \end{math}
    \cite[Proposition~5.2]{Steinberg:1999f}.
  \item
    \begin{math}
      \pv{PH}\subseteq\pv J*\pv H=\langle{\Pol H}\rangle\subseteq\pv
      J\malcev\pv H
    \end{math}
    \cite[Theorem~5.2 and Corollary~5.2]{Steinberg:1999f}.
  \item In case \pv H is arborescent,
    \begin{math}
      \pv{PH}=\pv J*\pv H=\langle{\Pol H}\rangle=\pv J\malcev\pv H
    \end{math}
    \cite[Theorem~5.9]{Steinberg:1999f}.
  \end{enumerate}
\end{Thm}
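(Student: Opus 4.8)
The plan is to treat the four items as a single package in which items~(1), (2), and~(4) are quoted verbatim from the indicated results of Steinberg, so that the only genuine assembly work lies in deriving the chain of inclusions in item~(3) from the remaining ingredients together with Lemma~\ref{l:PolH}.

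First I would record the standard fact that a semidirect product is always contained in the corresponding Mal'cev product: the projection $\pv V*\pv W\to\pv W$ is an (order-preserving) homomorphism under which the preimage of each idempotent lies in~\pv V, so that $\pv V*\pv W\subseteq\pv V\malcev\pv W$. Instantiating this with $\pv V=\pv J$ and $\pv W=\pv H$ yields $\pv J*\pv H\subseteq\pv J\malcev\pv H$, which is the rightmost inclusion of item~(3); instantiating it instead with $\pv V=\pv J^+$ gives $\pv J^+*\pv H\subseteq\pv J^+\malcev\pv H$, the right-hand side of which equals $\Pol H$ by Lemma~\ref{l:PolH}.

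It then remains to establish the central equality $\pv J*\pv H=\langle\Pol H\rangle$ by two opposite inclusions. For the inclusion from left to right I would apply the operator $\langle\cdot\rangle$ to $\pv J^+*\pv H\subseteq\Pol H$ and invoke item~(1) with $\pv V=\pv H$, giving $\pv J*\pv H=\langle\pv J^+*\pv H\rangle\subseteq\langle\Pol H\rangle$. For the reverse inclusion I would combine Lemma~\ref{l:PolH} with item~(2) to obtain $\Pol H=\pv J^+\malcev\pv H\subseteq\pv J*\pv H$, and then apply $\langle\cdot\rangle$, observing that $\pv J*\pv H$ is already a pseudovariety of monoids, so that $\langle\pv J*\pv H\rangle=\pv J*\pv H$. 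The only remaining piece of item~(3), namely $\pv{PH}\subseteq\pv J*\pv H$, is Steinberg's and would simply be quoted.

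I expect the main obstacle to be item~(4): showing that arborescence of~\pv H forces all four pseudovarieties to coincide, the substantive content being the reverse inclusion $\pv J\malcev\pv H\subseteq\pv{PH}$. This cannot be recovered from the formal manipulations above, since it genuinely uses the hypothesis $(\pv H\cap\pv{Ab})*\pv H=\pv H$ and the tree-like structure of the Cayley graphs of the free pro-\pv H groups that underlies Steinberg's argument. Accordingly I would quote it directly from \cite[Theorem~5.9]{Steinberg:1999f} rather than attempt to reprove it.
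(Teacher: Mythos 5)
Your proposal is consistent with the paper, but be aware that the paper offers no proof of this theorem at all: it is presented purely as a summary of quoted results, each item carrying its own citation to Steinberg, and the only assembly the paper performs is the remark immediately following the statement, where the chain $\Pol H=\pv J^+\malcev\pv H\subseteq\pv J*\pv H=\langle\Pol H\rangle\subseteq\pv J\malcev\pv H$ is read off from Lemma~\ref{l:PolH} together with the theorem itself. Your reconstruction of the middle equality and the right-hand inclusion of item~(3) from items~(1) and~(2) is essentially that remark run in reverse, and both inclusions you supply for $\pv J*\pv H=\langle\Pol H\rangle$ are correct; this buys a slightly more self-contained item~(3) at no real cost, since $\pv{PH}\subseteq\pv J*\pv H$ and items~(1), (2), (4) must in any case be taken from the source. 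One caveat: your blanket claim that a semidirect product is always contained in the corresponding Mal'cev product is false as stated. The preimage of an idempotent $e$ of~$T$ under the projection $S*T\to T$ is $S\times\{e\}$ equipped with the twisted multiplication $(s_1,e)(s_2,e)=(s_1\,{}^{e}s_2,e)$, which need not lie in the pseudovariety of~$S$; for instance, the two-element left-zero semigroup belongs to $\pv G*\pv{Sl}$ but not to $\pv G\malcev\pv{Sl}$. The inclusion does hold when the second factor is a pseudovariety of groups, because the action is unitary and the unique idempotent is the identity, whose preimage is isomorphic to~$S$; since both of your instantiations take the second factor to be~\pv H, your argument survives, but the justification should be stated for that case only.
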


A related problem for which only partial answers seem to be known is
the following.

\begin{Problem}
  When does the equality
  \begin{math}
    \langle\pv J^+\malcev\pv V\rangle=\pv J\malcev\pv V
  \end{math}
  hold for a given pseudovariety \pv V of monoids?
\end{Problem}

Note that, from Lemma~\ref{l:PolH} and
Theorem~\ref{t:Steinberg-IJAC10} it follows that, if \pv H is a
pseudovariety of groups, then
\begin{displaymath}
  \Pol H %
  =\pv J^+\malcev\pv H %
  \subseteq \pv J*\pv H=\langle{\Pol H}\rangle %
  \subseteq\pv J\malcev\pv H.
\end{displaymath}
In particular, we get the following result.

\begin{Cor}
  \label{c:BPolH}
  If \pv H is a pseudovariety of groups, then
  $\langle\pv J^+\malcev\pv H\rangle=\pv J*\pv H$.\qed
\end{Cor}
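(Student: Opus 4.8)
The plan is to obtain this corollary directly by combining the two results already assembled in the chain of inclusions displayed just before the statement. First I would invoke Lemma~\ref{l:PolH}, which asserts the equality of pseudovarieties of ordered monoids
\begin{math}
  \pv J^+\malcev\pv H=\Pol H
\end{math}
whenever \pv H is a pseudovariety of groups. Since the operator $\langle\cdot\rangle$, sending a pseudovariety of ordered monoids to the pseudovariety of monoids it generates, is well defined and sends equal inputs to equal outputs, applying it to both sides gives
\begin{math}
  \langle\pv J^+\malcev\pv H\rangle=\langle\Pol H\rangle.
\end{math}

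It then remains to identify the right-hand side. This is precisely the middle equality $\pv J*\pv H=\langle\Pol H\rangle$ recorded in Theorem~\ref{t:Steinberg-IJAC10}(3), valid for any pseudovariety of groups \pv H. Chaining the two identities yields
\begin{math}
  \langle\pv J^+\malcev\pv H\rangle=\langle\Pol H\rangle=\pv J*\pv H,
\end{math}
which is the asserted equality.

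I do not expect a genuine obstacle here: all the substantive content — the identification of \Pol H with the Mal'cev product $\pv J^+\malcev\pv H$, and the computation of the generated monoid pseudovariety $\langle\Pol H\rangle$ as the semidirect product $\pv J*\pv H$ — is already packaged in Lemma~\ref{l:PolH} and Theorem~\ref{t:Steinberg-IJAC10}, the latter resting on the cited work of Steinberg. The single point one must notice, and it is immediate, is that passing to the generated pseudovariety of monoids preserves the equality of ordered pseudovarieties; this is exactly why the statement is recorded with \qed as an \emph{in particular} consequence of the preceding display rather than accompanied by a separate argument.
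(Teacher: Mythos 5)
Your proof is correct and follows exactly the route the paper intends: the corollary is stated with a \qed precisely because it results from applying $\langle\cdot\rangle$ to the equality $\Pol H=\pv J^+\malcev\pv H$ of Lemma~\ref{l:PolH} and then invoking $\langle\Pol H\rangle=\pv J*\pv H$ from Theorem~\ref{t:Steinberg-IJAC10}(3), which is the displayed chain immediately preceding the statement. Nothing is missing.
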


\begin{Remark}
  \label{r:J*H-vs-JmH}
  Note that the inclusion 
  \begin{math}
    \pv J*\pv H \subseteq\pv J\malcev\pv H
  \end{math}
  may be strict. A characterization of the pseudovarieties of groups
  for which equality holds is given
  in~\cite[Theorem~8.3]{Auinger&Steinberg:2001a}. These are the
  so-called arboreous pseudovarieties of groups, defined in terms of
  certain geometrical properties of the Cayley graphs of the
  corresponding relatively free profinite groups. That %
  $\pv J*\pv H\subsetneqq\pv J\malcev\pv H$ %
  for all nontrivial pseudovarieties satisfying some identity of the
  form $x^n=1$ had previously been shown
  in~\cite[Theorem~7.32]{Steinberg:1999c}.
\end{Remark}

In general, we may use the basis theorems for the Mal'cev, see
\cite{Pin&Weil:1996a}, and semidirect products, see
\cite{Almeida&Weil:1996} and also
\cite[Section~3.7]{Rhodes&Steinberg:2009qt}, to obtain bases of
pseudoidentities:
\begin{itemize}
\item using \cite{Knast:1983b} and \cite{Almeida&Weil:1996}, $\pv
  J*\pv H$ is defined by the pseudoidentities of the form %
  \begin{displaymath}
    (xy)^\omega xt(zt)^\omega=(xy)^\omega(zt)^\omega
  \end{displaymath}
  for all pseudowords $x,y,z,t$ such that the pseudoidentities $x=z$
  and $xy=zt=1$ hold in~\pv H;
\item $\pv J\malcev\pv H$ is defined by the pseudoidentities of the
  forms %
  \begin{displaymath}
    u^{\omega+1}=u^\omega \text{ and } (uv)^\omega=(vu)^\omega
  \end{displaymath}
  for all pseudowords $u,v$ such that the pseudoidentities $u^2=u=v$
  hold in~\pv H.
\end{itemize}
In the particular case where
\begin{math}
  \pv H=\op x^n=1\cl,
\end{math}
since this pseudovariety is locally finite by Zelmanov's solution of
the restricted Burnside problem, see \cite{Zelmanov:1991}, we may take
above $x,y,z,t,u,v$ to be words. More precisely the \pv H-free groups
are finite and computable (see the discussion
on~\cite[Page~369]{Rhodes&Steinberg:2009qt}). Hence, given a finite
monoid, to test membership in the pseudovarieties
\begin{math}
  \pv J*\op x^n=1\cl
\end{math}
and
\begin{math}
  \pv J\malcev\op x^n=1\cl,
\end{math}
one only needs to check a computable finite set of effectively
verifiable pseudoidentities in the respective bases above. Indeed,
given a finite monoid $M$, with a generating subset $A$, one only
needs to consider the above pseudoidentities in which the words
$x,y,z,t,u,v$ have length at most
\begin{math}
  |M\times\Om A\op x^n=1\cl|.
\end{math}
Hence, both those product pseudovarieties have decidable membership
problem, which adds to the interest in comparing them with the main
object of our investigation, namely the pseudovariety
\begin{math}
  \langle\op 1\le x^n\cl\rangle.
\end{math}

\section{The pseudovariety \texorpdfstring{$(\pv{BG})_n$}{BGn}}
\label{sec:BGn}

We introduce three alternative bases of pseudoidentities for a
pseudovariety naturally associated with~\pv{BG} and a positive
integer~$n$. Recall that \pv{BG} may be defined by the pseudoidentity
\begin{math}
  (x^\omega y)^\omega=(yx^\omega)^\omega.
\end{math}

\begin{Lemma}
  \label{l:BGn}
  The pseudovarieties %
  \begin{align*}
    \pv U_n&=\op x^{\omega+n}=x^\omega,\
           (xy^n)^\omega=(y^nx)^\omega\cl, \\
    \pv V_n&=\op x^{\omega+n}=x^\omega,\
           (xy^\omega)^\omega=(y^nx)^\omega\cl, \\
    \pv W_n&=\op(xy^\omega z)^{\omega+1}=(xy^nz)^{\omega+1}\cl
           \cap\pv{BG}
  \end{align*}
  coincide.
\end{Lemma}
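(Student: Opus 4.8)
The plan is to prove $\pv U_n=\pv V_n=\pv W_n$ by establishing equalities (or mutual inclusions) between the three defining sets of pseudoidentities, working inside the common ambient framework of $\pv{BG}$. The key observation is that all three pseudovarieties are contained in (or equal to a subclass of) $\pv{BG}$, and that over $\pv{BG}$ one has strong structural information—most importantly, that in the relevant regular parts the idempotents behave as if $\pv{BG}=\pv{EJ}$, so that $y^\omega$ and $y^n$ are interchangeable precisely when inserting $n$-powers of elements does not disturb the group structure modulo the identity $x^n=1$ on subgroups.

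First I would show $\pv U_n\subseteq\pv{BG}$ and $\pv V_n\subseteq\pv{BG}$, since $\pv W_n$ is defined with an explicit intersection with $\pv{BG}$ but $\pv U_n,\pv V_n$ are not. For this I would use the recalled defining pseudoidentity $(x^\omega y)^\omega=(yx^\omega)^\omega$ for $\pv{BG}$ and derive it from the generators of $\pv U_n$ (and of $\pv V_n$); the pseudoidentity $x^{\omega+n}=x^\omega$ forces subgroups to satisfy $x^n=1$, i.e.\ $\langle\pv U_n\rangle\subseteq\overline{\op x^n=1\cl}$, and the commutation pseudoidentity $(xy^n)^\omega=(y^nx)^\omega$ (respectively $(xy^\omega)^\omega=(y^nx)^\omega$) should be strong enough, combined with $x^{\omega+n}=x^\omega$, to yield the block-group condition. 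Once all three sit inside $\pv{BG}$, the problem reduces to comparing the three sets of extra pseudoidentities \emph{relative to} $\pv{BG}$, which is a cleaner task.

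Next I would prove the chain of implications between the relative pseudoidentities. Over $\pv{BG}$, one has $y^{\omega+n}=y^\omega$ available, so $y^n=y^\omega\cdot y^n=y^{\omega+n}\cdot\!$-type manipulations let one replace $y^\omega$ by $y^n$ up to multiplication by idempotents; the natural route is $\pv W_n\Rightarrow\pv V_n\Rightarrow\pv U_n$ (deducing the weaker-looking two-variable commutations from the three-variable one by specializing $x$ or $z$ to $1$, or to $x$ and $y^\omega$ appropriately) and then closing the loop $\pv U_n\Rightarrow\pv W_n$, which is the substantive direction: from $x^{\omega+n}=x^\omega$ and $(xy^n)^\omega=(y^nx)^\omega$ one must recover the full insertion identity $(xy^\omega z)^{\omega+1}=(xy^nz)^{\omega+1}$. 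Here I would exploit that in a block group the $\Cl J$-class structure lets one rewrite $(xy^\omega z)^{\omega+1}$ by conjugating the idempotent $y^\omega$ past $x$ and $z$ using the commutation relations, then replace $y^\omega$ by $y^n$ since both are idempotent-equivalent modulo $x^n=1$.

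The hard part will be precisely this last implication, $\pv U_n\Rightarrow\pv W_n$: passing from two-variable commutation pseudoidentities to the three-variable (context-sensitive) insertion pseudoidentity. The difficulty is that $(xy^\omega z)^{\omega+1}$ involves $y^\omega$ sandwiched between two \emph{different} pseudowords $x$ and $z$, whereas $\pv U_n$ only controls how $y^n$ commutes with a single adjacent factor. I expect to need a careful computation inside the maximal subgroup of the $\Cl J$-class of the idempotent $(xy^\omega z)^\omega$, using that over $\pv{BG}$ this subgroup is a group satisfying $x^n=1$, so that the substitution $y^\omega\mapsto y^n$ costs only a bounded correction that is absorbed by the $\omega+1$ exponent. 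If a direct syntactic derivation proves awkward, the fallback is a semantic argument: show that every finite ordered (or unordered) monoid in $\pv{BG}$ satisfying the $\pv U_n$-pseudoidentities also satisfies the $\pv W_n$-pseudoidentity, by analyzing the action on the relevant regular $\Cl D$-class, which reduces the claim to the finite group case where $x^n=1$ makes the verification immediate.
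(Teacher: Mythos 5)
Your overall skeleton (a cycle of inclusions obtained by deriving each basis of pseudoidentities from another, with the easy directions coming from substitutions such as $x\mapsto 1$, $z\mapsto 1$, $y\mapsto y^\omega$) matches the paper's strategy, and your preliminary observation is fine: the containments $\pv U_n,\pv V_n\subseteq\pv{BG}$ do fall out immediately by substituting $y^\omega$ for $y$ in the respective commutation pseudoidentities. But there are two genuine gaps. First, you dismiss $\pv V_n\Rightarrow\pv U_n$ as a specialization, and it is not one: no substitution into $(xy^\omega)^\omega=(y^nx)^\omega$ produces $(xy^n)^\omega=(y^nx)^\omega$, because every instance of the former carries an $\omega$-power on one side; the asymmetry of the $\pv V_n$ identity ($y^\omega$ to the right of $x$ on one side, $y^n$ to the left on the other) means the ``dual'' instance you would need is simply not available. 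Deriving one two-variable commutation from the other is exactly the computational core of the lemma, not a triviality. Second, the step you correctly flag as substantive, $\pv U_n\Rightarrow\pv W_n$, is never actually carried out: the claim that replacing $y^\omega$ by $y^n$ ``costs only a bounded correction absorbed by the $\omega+1$ exponent'' is not an argument, and the proposed fallback reduction ``to the finite group case where $x^n=1$ makes the verification immediate'' does not work as stated, since $x$, $y$, $z$ need not be group elements and the identity $(xy^\omega z)^{\omega+1}=(xy^nz)^{\omega+1}$ lives in the maximal subgroup of a $\Cl J$-class determined by the interaction of all three, not in a subgroup where $x^n=1$ can be invoked directly.

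For comparison, the paper runs the cycle in the opposite order, $\pv W_n\subseteq\pv U_n\subseteq\pv V_n\subseteq\pv W_n$, which distributes the difficulty better. The inclusion $\pv W_n\subseteq\pv U_n$ is the routine specialization step. The core is $\pv U_n\subseteq\pv V_n$: starting from $(xy^n)^\omega=(y^nx)^\omega$ and $x^{\omega+n}=x^\omega$, one shows by an iteration-and-limit argument that the idempotents $(y^\omega x)^\omega$ and $(y^nx)^\omega$ satisfy $e=fe$ and $f=ef$, hence are $\Cl R$-equivalent, and by a symmetric argument $\Cl L$-equivalent, hence equal. Once $(xy^\omega)^\omega=(y^nx)^\omega$ is in hand, $\pv V_n\subseteq\pv W_n$ is a short direct chain of rewritings of $(xy^nz)^{\omega+1}$ into $(xy^\omega z)^{\omega+1}$ using the \pv{BG} conjugation identities. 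If you want to salvage your route, you would need to supply a Green's-relations argument of the same kind for $\pv V_n\Rightarrow\pv U_n$ and an explicit derivation for $\pv U_n\Rightarrow\pv W_n$; as written, both of the nontrivial links in your chain are missing.
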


\begin{proof}
  ($\pv W_n\subseteq\pv U_n$) Taking $x=z=1$ in
  \begin{math}
    (xy^\omega z)^{\omega+1}=(xy^nz)^{\omega+1},
  \end{math}
  we obtain
  \begin{math}
    y^\omega=y^{\omega+n}.
  \end{math}
  If, instead, we take $x=1$, we get
  \begin{math}
    (y^\omega z)^{\omega+1}=(y^nz)^{\omega+1}
  \end{math}
  which, by raising both sides to the power $\omega$, yields
  \begin{math}
    (y^\omega z)^\omega=(y^nz)^\omega.
  \end{math}
  Similarly, we also have
  \begin{math}
    (zy^\omega)^\omega=(zy^n)^\omega
  \end{math}
  in~$\pv W_n$. Since $\pv W_n$ is contained in~\pv{BG}, it also
  satisfies
  \begin{math}
    (y^\omega z)^\omega=(zy^\omega)^\omega.
  \end{math}
  Hence, $\pv W_n$ satisfies
  \begin{math}
    (zy^n)^\omega=(y^nz)^\omega.
  \end{math}

  ($\pv U_n\subseteq\pv V_n$) The proof consists in showing that
  several pseudoidentities hold in~$\pv U_n$. Throughout we write
  equality in the sense of pseudoidentities valid in~$\pv U_n$.
  Substituting $y^\omega$ for~$y$ in the pseudoidentity
  \begin{math}
    (xy^n)^\omega=(y^nx)^\omega,
  \end{math}
  we obtain
  \begin{math}
    (xy^\omega)^\omega=(y^\omega x)^\omega.
  \end{math}
  Associativity gives
  \begin{displaymath}
    (y^nx)^\omega %
    =y^n(xy^n)^\omega(xy^n)^{\omega-1}x %
    =y^n\cdot(y^nx)^\omega\cdot(xy^n)^{\omega-1}x,
  \end{displaymath}
  which, by iteration, yields
  \begin{math}
    (y^nx)^\omega=y^{nk}(y^nx)^\omega\bigl((xy^n)^{\omega-1}x\bigr)^k,
  \end{math}
  so that, taking limits, we get
  \begin{math}
    (y^nx)^\omega=y^\omega(y^nx)^\omega\bigl((xy^n)^{\omega-1}x\bigr)^\omega,
  \end{math}
  whence
  \begin{math}
    (y^nx)^\omega=y^\omega(y^nx)^\omega
  \end{math}
  since $y^\omega$ is an
  idempotent. Similarly, we obtain
  \begin{displaymath}
    (y^nx)^\omega %
    =y^\omega(y^nx)^\omega %
    =y^\omega(xy^n)^\omega %
    =y^\omega x\cdot(y^nx)^\omega\cdot y^n(xy^n)^{\omega-1},
  \end{displaymath}
  whence %
  \begin{math}
    (y^nx)^\omega %
    =(y^\omega x)^k(y^nx)^\omega\bigl(y^n(xy^n)^{\omega-1}\big)^k
  \end{math}
  and
  \begin{math}
    (y^nx)^\omega=(y^\omega x)^\omega(y^nx)^\omega.
  \end{math}
  On the other hand, we have
  \begin{displaymath}
    (y^\omega x)^\omega %
    =y^n(y^\omega x)^\omega %
    =y^n(xy^\omega)^\omega %
    =y^nx\cdot(y^\omega x)^\omega\cdot y^\omega(xy^\omega)^{\omega-1}
  \end{displaymath}
  so that
  \begin{math}
    (y^\omega x)^\omega=(y^nx)^\omega(y^\omega x)^\omega
  \end{math}
  follows as above. Hence, the idempotents $(y^\omega x)^\omega$ and
  $(y^nx)^\omega$ are \Cl R-equivalent. By symmetry, they are also \Cl
  L-equivalent, whence they are equal. In particular, we get
  \begin{math}
    (xy^\omega)^\omega=(y^nx)^\omega.
  \end{math}

  ($\pv V_n\subseteq\pv W_n$) Substituting $y^\omega$ for $y$ in the
  pseudoidentity
  \begin{math}
    (xy^\omega)^\omega=(y^nx)^\omega,
  \end{math}
  we obtain
  \begin{math}
    (xy^\omega)^\omega=(y^\omega x)^\omega,
  \end{math}
  which is a defining pseudoidentity for~\pv{BG}. It remains to show
  that $\pv V_n$ satisfies the pseudoidentity %
  \begin{math}
    (xy^\omega z)^{\omega+1}=(xy^nz)^{\omega+1}.
  \end{math}
  Indeed, it satisfies the following pseudoidentities:
  \begin{align*}
    (xy^n z)^{\omega+1} %
    &=x(y^n zx)^\omega y^n z %
      =x(zxy^\omega)^\omega y^nz \\
    &=x(zxy^\omega)^\omega z %
      =x(zxy^\omega)^\omega y^\omega z \\
    &=x(y^\omega zx)^\omega y^\omega z %
      =(xy^\omega z)^{\omega+1}.
  \end{align*}
  This completes the proof of the lemma.
\end{proof}

From hereon, we denote by $(\pv{BG})_n$ the pseudovariety of
Lemma~\ref{l:BGn}.

We now return to the pseudovariety of ordered monoids $\op 1\le
x^n\cl$.

\begin{Prop}
  \label{p:upperbound-1LEQxn}
  The pseudovariety $\op 1\le x^n\cl$ is contained in~$(\pv{BG})_n$.
\end{Prop}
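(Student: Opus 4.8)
The goal is to show $\op 1\le x^n\cl\subseteq(\pv{BG})_n$. Since we are free to use whichever of the three equivalent bases from Lemma~\ref{l:BGn} is most convenient, I will work with the description $(\pv{BG})_n=\pv U_n$, so that it suffices to verify that every finite ordered monoid $M$ satisfying $1\le x^n$ also satisfies the two (unordered) pseudoidentities $x^{\omega+n}=x^\omega$ and $(xy^n)^\omega=(y^nx)^\omega$. The plan is thus to forget the order on such an $M$ and prove these two equalities directly from the inequality $1\le x^n$ together with its order-preserving consequences.

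First I would establish $x^{\omega+n}=x^\omega$. From $1\le x^n$, compatibility of the order gives $s\le sx^n$ and $s\le x^ns$ for every element $s$; specialising and iterating yields a chain $x^\omega\le x^{\omega+n}\le x^{\omega+2n}\le\cdots$. To get the reverse inequality I would exploit that in the finite monoid the cyclic subgroup generated by $x^\omega$ (the maximal subgroup of $\overline{\langle x\rangle}$) is finite, so some positive multiple of $n$ is congruent to $0$ modulo the group's order, giving $x^{\omega+kn}=x^\omega$ for suitable $k$; combined with the monotone chain this forces all the intermediate terms, in particular $x^{\omega+n}$, to equal $x^\omega$. The key point is that $1\le x^n$ pushes elements \emph{up} under multiplication by $x^n$, while periodicity caps the chain, so the order squeezes $x^{\omega+n}$ down to $x^\omega$.

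Next I would prove $(xy^n)^\omega=(y^nx)^\omega$. The two idempotents $(xy^n)^\omega$ and $(y^nx)^\omega$ are already conjugate, hence $\Cl D$-equivalent and lie in isomorphic maximal subgroups; the task is to show they actually coincide. I expect to extract comparabilities from $1\le y^n$: multiplying on suitable sides shows that $(xy^n)^\omega$ and $(y^nx)^\omega$ are each $\le$ or $\ge$ the relevant products, and the idea is to deduce that each idempotent lies above the other, so that antisymmetry of the partial order forces equality. Concretely I would aim to sandwich the two idempotents between expressions of the form $(xy^n)^\omega s (y^nx)^\omega$ by inserting the $n$-power $y^n$ (which is $\ge 1$) in the correct position, thereby bridging the two $\Cl H$-classes and collapsing the comparison to a single group element that must be idempotent.

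The main obstacle will be the second pseudoidentity: showing that two conjugate idempotents are \emph{equal} rather than merely $\Cl D$-related requires genuinely using the order, and the delicate part is arranging the insertions of $n$-powers so that the resulting comparabilities point in opposite directions and close up under antisymmetry. A cleaner alternative, which I would pursue if the direct manipulation proves awkward, is to observe that $1\le x^n$ already forces $M$ to be a block group: the inequality makes $M$ satisfy the group-pseudoidentity content needed so that $\langle\op 1\le x^n\cl\rangle$ sits inside $\pv{EJ}\cap\overline{\op x^n=1\cl}=\pv{BG}\cap\overline{\op x^n=1\cl}$, at which point membership in $(\pv{BG})_n=\pv W_n$ reduces to checking the single pseudoidentity $(xy^\omega z)^{\omega+1}=(xy^nz)^{\omega+1}$; inside a block group the idempotents multiply transparently, and the order then collapses the $y^\omega$ and $y^n$ factors exactly as in the $\pv V_n\subseteq\pv W_n$ computation of Lemma~\ref{l:BGn}.
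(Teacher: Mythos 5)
Your reduction to the basis $\pv U_n$ is legitimate, and the easy ingredients are handled correctly: the chain $x^\omega\le x^{\omega+n}\le\cdots\le x^{\omega+kn}=x^\omega$ does give $x^{\omega+n}=x^\omega$, and the observation that $1\le e$ for every idempotent $e$ (so that \Cl R- or \Cl L-equivalent idempotents coincide) does place $\op 1\le x^n\cl$ inside $\pv{BG}$. But the whole difficulty of the proposition is concentrated in the one step you leave as an intention, namely $(xy^n)^\omega=(y^nx)^\omega$ (equivalently, once inside \pv{BG}, the $\pv W_n$ pseudoidentity $(xy^\omega z)^{\omega+1}=(xy^nz)^{\omega+1}$), and neither of your two plans for it works as described. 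The inequality $1\le x^n$ only produces \emph{upward} comparisons under insertion of $n$-th powers: it gives $e\le ef$, $f\le fe$, $y^n\le y^\omega$, $(xy^n)^\omega\le(xy^\omega)^\omega$, and so on, but your ``sandwich'' $(xy^n)^\omega\le(xy^n)^\omega s(y^nx)^\omega$ only ever bounds both idempotents from above by a third element, so antisymmetry never receives a pair of oppositely directed comparisons. What the hard direction requires is a witness exhibiting $(xy^nz)^{\omega+1}$ as obtained from $xy^\omega z$ (suitably powered) by insertions of $n$-th powers; this is the paper's key computation, which writes $y^\omega=(y^n)^\omega$, inserts $(zx)^n$ between consecutive $y^n$-blocks, regroups to recognize $(xy^nz)^{\omega-n+1}$, and then inserts $(xy^nz)^n$ to reach $(xy^nz)^{\omega+1}$. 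Nothing in your sketch supplies this, or any equivalent, witness.

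Your fallback route is moreover circular: the $\pv V_n\subseteq\pv W_n$ computation of Lemma~\ref{l:BGn} that you invoke to ``collapse the $y^\omega$ and $y^n$ factors'' takes as hypothesis the $\pv V_n$ pseudoidentity $(xy^\omega)^\omega=(y^nx)^\omega$, which is precisely a form of the identity you are trying to establish, so it cannot be used here. In short, the proposal correctly isolates where the problem lies but does not contain the idea that solves it.
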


\begin{proof}
  The pseudovariety $\op 1\le x^n \cl$ is contained in
  \begin{math}
    \op 1\le x^\omega\cl=\Pol G\subseteq\pv{BG}.
  \end{math}
  Moreover, it satisfies the following inequalities:
  \begin{align*}
    xy^\omega z %
    &=x(y^n)^\omega z \\
    &\le x\bigl(y^n(zx)^n\bigr)^{\omega-1}y^nz %
      =x\bigl(y^nz(xz)^{n-1}x\bigr)^{\omega-1}y^nz \\
    &\le x\bigl(y^nz(xy^nz)^{n-1}x\bigr)^{\omega-1}y^nz %
      =(xy^nz)^{n(\omega-1)+1} %
      =(xy^nz)^{\omega-n+1} \\
    &\le (xy^nz)^{\omega+1},
  \end{align*}
  whence
  \begin{math}
    xy^\omega z\le(xy^nz)^{\omega+1}.
  \end{math}
  Raising both sides of the preceding inequality to the power
  $\omega+1$, we obtain
  \begin{math}
    (xy^\omega z)^{\omega+1}\le(xy^nz)^{\omega+1}.
  \end{math}
  For the reverse inequality, just note that $\op 1\le x^n\cl$
  satisfies $y^n\le y^{mn}$ for every positive integer $m$ and,
  therefore, also $y^n\le y^\omega$.
\end{proof}

The following lemma gathers some elementary properties of the
pseudovariety \pv{BG}.

\begin{Lemma}
  \label{l:pseudoidentities-for-BG}
  The pseudovariety \pv{BG} satisfies the following pseudoidentities:
  \begin{itemize}
  \item
    \begin{math}
      (xy^{\omega+1})^\omega %
      =y^{\omega-1}(y^{\omega+1}x)^\omega y^{\omega+1};
    \end{math}
  \item
    \begin{math}
      (xy^\omega z)^\omega(xz)^{\omega+1} %
      =(xy^\omega z)^{\omega+1} %
      =(xz)^{\omega+1}(xy^\omega z)^\omega;
    \end{math}
  \item
    \begin{math}
      (xy^\omega z)^\omega(xz)^\omega %
      =(xy^\omega z)^\omega %
      =(xz)^\omega(xy^\omega z)^\omega;
    \end{math}
  \item
    \begin{math}
      (xy^\omega z)^\omega(xt^\omega z)^\omega = (xy^\omega
      z)^{\omega+1}(xt^\omega z)^{\omega-1}.
    \end{math}
  \end{itemize}
\end{Lemma}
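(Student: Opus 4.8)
The plan is to reduce all four pseudoidentities to two facts valid in every profinite monoid, namely $(st)^\omega s=s(ts)^\omega$ and $(ue)^\omega e=(ue)^\omega$ for an idempotent $e$, together with the single defining property of $\pv{BG}$, namely $(eu)^\omega=(ue)^\omega$ whenever $e$ is idempotent (substitute $x=e$ in the basis $(x^\omega y)^\omega=(yx^\omega)^\omega$, using $e=e^\omega$). Combining the last two yields $e(ue)^\omega=(ue)^\omega=(ue)^\omega e$, so an idempotent $e$ acts as a two-sided identity on $(ue)^\omega$. The engine of the whole argument is an \emph{absorption law}: for an idempotent $e$ and arbitrary $x,z$, I claim $(xez)^\omega xe=(xez)^\omega x$ and, dually, $ez(xez)^\omega=z(xez)^\omega$. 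For the first, $(st)^\omega s=s(ts)^\omega$ with $s=x$, $t=ez$ gives $(xez)^\omega x=x(ezx)^\omega$; since $(ezx)^\omega=(zxe)^\omega$ by the defining property and $(zxe)^\omega e=(zxe)^\omega$ by the universal fact, the trailing $e$ is absorbed. The dual is symmetric.

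With this in hand the first pseudoidentity is immediate. Writing $g=y^{\omega+1}$ and $e=y^\omega$ (so $y^{\omega-1}=g^{-1}$ and $eg=ge=g$), the identity $(st)^\omega s=s(ts)^\omega$ gives $y^{\omega-1}(y^{\omega+1}x)^\omega y^{\omega+1}=g^{-1}g(xg)^\omega=e(xg)^\omega$, and $e(xg)^\omega=(xg)^\omega$ because $(xg)^\omega=(xge)^\omega=(exg)^\omega$ is left-absorbing in $e$.

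The crux is the third pseudoidentity. Set $f=(xy^\omega z)^\omega$ and $e=y^\omega$. Using the absorption law I will prove $f(xz)^m=f(xez)^m$ for every $m\ge1$ by induction: the base case is $f(xz)=fxz=fxez=f(xez)$, and the inductive step rewrites $f(xez)^m x=f(xez)^m xe$, which is licit since $f(xez)^m=(xez)^{\omega+m}$ lies in the maximal subgroup with identity $f$, so that $g:=f(xez)^m$ satisfies $g=gf$ and hence $gx=gfx=g(fxe)=gxe$. Letting $m\to\infty$ along factorials and using continuity of multiplication gives $f(xz)^\omega=f(xez)^\omega=f$, that is $(xy^\omega z)^\omega(xz)^\omega=(xy^\omega z)^\omega$; the dual equality follows from the dual absorption law.

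The second and fourth pseudoidentities are then corollaries. For the second, $(xy^\omega z)^\omega(xz)^{\omega+1}=f(xz)^\omega(xz)=f(xz)=fxz=fxez=(xy^\omega z)^{\omega+1}$, the right-hand equality being dual. For the fourth, write $a=xy^\omega z$ and $b=xt^\omega z$; multiplying the claimed identity on the right by $b$ (and conversely by $b^{\omega-1}$, using $b^{\omega+1}b^{\omega-1}=b^\omega$) shows it is equivalent to $a^\omega b^{\omega+1}=a^{\omega+1}b^\omega$, and this follows from the second pseudoidentity applied to both $a$ and $b$, since $b^{\omega+1}=(xz)^{\omega+1}b^\omega$ and $a^\omega(xz)^{\omega+1}=a^{\omega+1}$. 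I expect the only delicate point to be the third pseudoidentity: one must ensure the absorption law is invoked only on elements of the form $f(xez)^m$, which genuinely lie in the $\mathcal{H}$-class of $f$, and that the limit $m\to\infty$ is taken along a net for which both $(xz)^m\to(xz)^\omega$ and $(xez)^{\omega+m}\to(xez)^\omega$.
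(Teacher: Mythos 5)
Your proof is correct and follows essentially the same route as the paper's: both rest on the conjugacy identity $(st)^\omega s=s(ts)^\omega$ and the \pv{BG} commutation $(ey)^\omega=(ye)^\omega$ for idempotent $e$, establish $(xy^\omega z)^{\omega+k}=(xy^\omega z)^\omega(xz)^k$ by iteration (your inductive claim $f(xz)^m=f(xez)^m$ is exactly this identity, with your ``absorption law'' playing the role of the paper's computation $(xy^\omega z)^{\omega+1}=(xy^\omega z)^\omega xz$), and then pass to the limit along factorials. The only differences are organizational: you derive the third item before the second and phrase the fourth via the equivalent form $a^\omega b^{\omega+1}=a^{\omega+1}b^\omega$.
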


\begin{proof}
  The following pseudoidentities hold in~\pv{BG}:
  \begin{align*}
    (xy^{\omega+1})^\omega %
    &=(xy\,y^\omega)^\omega %
      =(y^\omega\,xy)^\omega y^\omega %
      =y^{\omega-1}(y^{\omega+1}x)^\omega y^{\omega+1}; \\
    (xy^\omega z)^{\omega+1} %
    &=x(y^\omega zx)^\omega y^\omega z %
      =x(zx y^\omega)^\omega y^\omega z %
      =x(zx y^\omega)^\omega z \\
    &=x(y^\omega zx)^\omega z %
      =(xy^\omega z)^\omega xz \\
    \therefore %
    (xy^\omega z)^{\omega+k} %
    &=(xy^\omega z)^\omega(xz)^k
      \text{ for every $k\ge1$} \\
    \therefore %
    (xy^\omega z)^{\omega+1} %
    &=(xy^\omega z)^\omega(xz)^{\omega+1} %
      \text{ and }
      (xy^\omega z)^\omega %
      =(xy^\omega z)^\omega(xz)^\omega \\
    \therefore %
    (xy^\omega z)^{\omega+k} %
    &=(xz)^k(xy^\omega z)^\omega %
      \text{ by symmetry}\\
    \therefore %
    (xy^\omega z)^{\omega+1}(xt^\omega z)^{\omega-1} %
    &=(xy^\omega z)^\omega(xz)\cdot(xz)^{\omega-1}(xt^\omega z)^\omega \\
    &=(xy^\omega z)^\omega(xz)^\omega(xt^\omega z)^\omega %
      =(xy^\omega z)^\omega(xt^\omega z)^\omega,
  \end{align*}
  which completes the proof.
\end{proof}

\begin{Cor}
  \label{c:pseudoidentities-for-BGn}
  The pseudovariety $(\pv{BG})_n$ satisfies the pseudoidentities
  \begin{align*}
    (xy^nz)^{\omega+1} %
    &=(xz)^{\omega+1}(xy^nz)^\omega %
      =(xy^nz)^\omega(xz)^{\omega+1} \\
    (xy^nz)^\omega %
    &=(xz)^\omega(xy^nz)^\omega %
      =(xy^nz)^\omega(xz)^\omega    
      .\qed
  \end{align*}
\end{Cor}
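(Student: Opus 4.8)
The plan is to derive all four pseudoidentities from Lemma~\ref{l:pseudoidentities-for-BG} by transferring, in its second and third items, from the $y^\omega$-form to the $y^n$-form. The key observation is that $(\pv{BG})_n$ is, by the description $\pv W_n$ of Lemma~\ref{l:BGn}, contained in~\pv{BG}, so every pseudoidentity of Lemma~\ref{l:pseudoidentities-for-BG} already holds in~$(\pv{BG})_n$; in particular, so do the equalities $(xy^\omega z)^\omega(xz)^{\omega+1}=(xy^\omega z)^{\omega+1}=(xz)^{\omega+1}(xy^\omega z)^\omega$ and $(xy^\omega z)^\omega(xz)^\omega=(xy^\omega z)^\omega=(xz)^\omega(xy^\omega z)^\omega$.

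Next I would record the two transfer equalities relating the $y^\omega$-form to the $y^n$-form in~$(\pv{BG})_n$. The first, $(xy^\omega z)^{\omega+1}=(xy^nz)^{\omega+1}$, is precisely the defining pseudoidentity of $\pv W_n$, hence valid in~$(\pv{BG})_n$. For the second, $(xy^\omega z)^\omega=(xy^nz)^\omega$, I would raise both sides of the first to the power~$\omega$ and invoke the general fact that $(s^{\omega+1})^\omega=s^\omega$ for every element $s$ of a profinite monoid, since $s^{\omega+1}$ lies in the maximal subgroup of $\overline{\langle s\rangle}$ whose identity is the idempotent~$s^\omega$.

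Finally I would substitute these two transfer equalities into the four \pv{BG}-pseudoidentities displayed above. Replacing $(xy^\omega z)^{\omega+1}$ by $(xy^nz)^{\omega+1}$ and $(xy^\omega z)^\omega$ by $(xy^nz)^\omega$ throughout turns the second item of Lemma~\ref{l:pseudoidentities-for-BG} into $(xy^nz)^{\omega+1}=(xz)^{\omega+1}(xy^nz)^\omega=(xy^nz)^\omega(xz)^{\omega+1}$ and the third item into $(xy^nz)^\omega=(xz)^\omega(xy^nz)^\omega=(xy^nz)^\omega(xz)^\omega$, which are exactly the asserted pseudoidentities. There is no genuine obstacle here: the only nonformal step is the passage from $(xy^\omega z)^{\omega+1}=(xy^nz)^{\omega+1}$ to $(xy^\omega z)^\omega=(xy^nz)^\omega$, and that is a routine application of the idempotent-power identity recalled above.
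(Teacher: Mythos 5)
Your proposal is correct and matches the argument the paper intends: the corollary is stated with an immediate \qed precisely because it follows from Lemma~\ref{l:pseudoidentities-for-BG} (valid in $(\pv{BG})_n\subseteq\pv{BG}$) by substituting the defining pseudoidentity $(xy^\omega z)^{\omega+1}=(xy^nz)^{\omega+1}$ of $\pv W_n$ and its $\omega$-power consequence $(xy^\omega z)^\omega=(xy^nz)^\omega$. Your justification of the latter via $(s^{\omega+1})^\omega=s^\omega$ is the same device already used in the proof of Lemma~\ref{l:BGn}, so there is nothing to add.
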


\section{Comparing several pseudovarieties}
\label{sec:comparison}

We start with a simple observation regarding the pseudovariety defined
by the Mal'cev product
\begin{math}
  \pv J\malcev\op x^n=1\cl.
\end{math}

\begin{Lemma}
  \label{l:Jm1=xn-in-BGn}
  \begin{math}
    \pv J\malcev\op x^n=1\cl\subseteq(\pv{BG})_n.
  \end{math}
\end{Lemma}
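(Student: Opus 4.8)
The plan is to reduce the statement to a single nontrivial pseudoidentity and then verify it by working with the relational morphism that witnesses membership in the Mal'cev product. First I would use that the Mal'cev product is monotone in its second argument, together with the classical equality $\pv J\malcev\pv G=\pv{BG}$ recalled in the proof of Corollary~\ref{c:JmH-vs-BH}, to obtain $\pv J\malcev\op x^n=1\cl\subseteq\pv J\malcev\pv G=\pv{BG}$. By Lemma~\ref{l:BGn} it then suffices to show that every $M\in\pv J\malcev\op x^n=1\cl$ satisfies the two defining pseudoidentities of~$\pv U_n$, namely $x^{\omega+n}=x^\omega$ and $(xy^n)^\omega=(y^nx)^\omega$. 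To this end I fix a relational morphism $\tau\colon M\to G$ with $G\in\op x^n=1\cl$ such that $N:=\tau^{-1}(1)$ is \Cl J-trivial, $1$ being the unique idempotent of the group~$G$.

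The identity $x^{\omega+n}=x^\omega$ is the easy one. Picking $g\in\tau(x)$ we have $g^n=1$, so $1\in\tau(x)^n\subseteq\tau(x^n)$ and hence $x^n\in N$; since $(x^n)^\omega=x^\omega$ and $N\in\pv J$ satisfies $u^{\omega+1}=u^\omega$, applying this to $u=x^n$ gives $x^{\omega+n}=x^\omega$. The same observation shows that $y^n$, as well as the idempotents $(xy^n)^\omega$ and $(y^nx)^\omega$, all lie in~$N$.

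The heart of the argument is the conjugacy $(xy^n)^\omega=(y^nx)^\omega$. Writing $e=(xy^n)^\omega$ and $f=(y^nx)^\omega$, the standard rotations give $ex=xf$ and $y^ne=fy^n$. I would set $g=ex=xf$ and check, using only these two relations together with the idempotency of $e$ and $f$, that $e=g\cdot y^n(xy^n)^{\omega-1}$ and $f=(y^nx)^{\omega-1}y^n\cdot g$; since also $g=ex\in eM$ and $g=xf\in Mf$, this yields $e\mathrel{\Cl R}g$ and $g\mathrel{\Cl L}f$, so that $R_g=R_e$ and $L_g=L_f$. As $M$ is a block group, each \Cl R-class and each \Cl L-class contains at most one idempotent; consequently, if the \Cl H-class $R_e\cap L_f$ containing $g$ harbours a single idempotent~$i$, then $i\in R_e$ forces $i=e$ and $i\in L_f$ forces $i=f$, whence $e=f$. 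It remains to produce such an idempotent, i.e. to prove that $g$ lies in a group \Cl H-class.

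This last step is exactly where I expect the main obstacle to be, and it is where the $n$-torsion becomes indispensable. The equality $(xy^n)^\omega=(y^nx)^\omega$ does not follow from \Cl J-triviality of $N$ and the rotations alone: in a general block group one may well have $(ab)^\omega\neq(ba)^\omega$, as the two distinct idempotents $e_{11},e_{22}$ of a Brandt semigroup show (they arise as $(ab)^\omega$ and $(ba)^\omega$ for suitable $a,b$ yet are \Cl R- and \Cl L-incomparable). Thus the conjugator $x$, which does not lie in~$N$, must be controlled by the hypothesis $x^n=1$ in~$G$. Concretely $g^n\in N$, so $g^{\omega+n}=g^\omega$, and the real work is to upgrade this to the assertion that $g$ generates a group \Cl H-class—equivalently that the two conjugate idempotents already share a common \Cl R- or \Cl L-class. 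This is precisely the feature that separates $\op x^n=1\cl$ from an arbitrary pseudovariety of groups, and pinning it down is the technical core of the proof.
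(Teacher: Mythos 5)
Your reduction to the two defining pseudoidentities of $\pv U_n$ is sound, and your verification of $x^{\omega+n}=x^\omega$ is complete and essentially the paper's own argument ($x^n\in N=\tau^{-1}(1)$ and $\pv J$ satisfies $u^{\omega+1}=u^\omega$). But the proof of the crucial second pseudoidentity $(xy^n)^\omega=(y^nx)^\omega$ is not actually there. After setting $e=(xy^n)^\omega$, $f=(y^nx)^\omega$, $g=ex=xf$ and correctly deriving $e\mathrel{\Cl R}g\mathrel{\Cl L}f$, you reduce everything to the claim that the \Cl H-class of $g$ contains an idempotent, and then explicitly defer that claim as ``the technical core of the proof.'' That claim is \emph{equivalent} to the statement being proved: you show it implies $e=f$, and conversely $e=f$ gives $g\in R_e\cap L_e=H_e$, a group. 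So the difficulty has been relocated, not resolved. The one piece of torsion you extract, $g^n\in N$ and hence $g^{\omega+n}=g^\omega$, holds for every element of every monoid in the class and cannot by itself force $g$ into a group \Cl H-class; I see no way to finish along these lines without, in effect, proving the lemma first.

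The gap closes immediately if you choose the right elements of $N$ instead of analysing $g=ex$ inside $M$. Besides $x^n$, the element $w=y(x^ny)^{\omega-1}$ also lies in $N$: picking $g_0\in\tau(x)$, $h\in\tau(y)$ and writing $(x^ny)^{\omega-1}=(x^ny)^{k-1}$ with $k$ a suitable common multiple of $n$ and the relevant periods, one gets $h(g_0^nh)^{k-1}=h^k=1\in\tau(w)$ (indeed $y(x^ny)^{\omega-1}=y\cdot y^{\omega-1}=1$ already in $\op x^n=1\cl$). Now $x^nw=(x^ny)^\omega$ and $wx^n=(yx^n)^\omega$, so the identity $(ab)^\omega=(ba)^\omega$ valid in $\pv J$, applied in $N$ to $a=x^n$ and $b=w$, yields $(x^ny)^\omega=(x^nw)^\omega=(wx^n)^\omega=(yx^n)^\omega$ at once. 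This is exactly the paper's proof in semantic clothing: the paper applies the Pin--Weil basis for $\pv J\malcev\pv H$ (recalled at the end of Section~3) to the pair $u=x^n$, $v=y(x^ny)^{\omega-1}$, both equal to $1$ in $\op x^n=1\cl$, and the whole lemma is two lines once that substitution is found. No Green's-relations analysis in $M$ is needed.
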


\begin{proof}
  The pseudoidentities
  \begin{math}
    x^n=1=y(x^ny)^{\omega-1}
  \end{math}
  are valid in~$\op x^n=1\cl$. Hence the Mal'cev product
  \begin{math}
    \pv J\malcev\op x^n=1\cl
  \end{math}
  satisfies the pseudoidentities
  \begin{align*}
    x^{\omega+n}
    &=(x^n)^{\omega+1}=(x^n)^\omega=x^\omega \\
    (x^ny)^\omega %
    &=\bigl(x^n\cdot y(x^ny)^{\omega-1}\bigr)^\omega %
      =\bigl(y(x^ny)^{\omega-1}\cdot x^n\bigr)^\omega %
      =(yx^n)^\omega,
  \end{align*}
  which proves the lemma.
\end{proof}

Note that the preceding proof may be adapted to show that the
pseudovariety
\begin{math}
  \pv J\malcev\op x^n=1\cl
\end{math}
satisfies the pseudoidentity
\begin{math}
  (ux)^\omega=(xu)^\omega
\end{math}
whenever the pseudoidentity $u=1$ holds in~$\op x^n=1\cl$. Such a
pseudoidentity
\begin{math}
  (ux)^\omega=(xu)^\omega
\end{math}
may however fail in~$(\pv{BG})_n$. For example, in case $n=2$, we may
take $u=yztytz$ and the resulting pseudoidentity fails in the
syntactic monoid of the language $(abcdbdc)^*$ over the alphabet
$\{a,b,c,d\}$, which lies in
\begin{math}
  \pv{BG}\cap\op x^3=x^2\cl,
\end{math}
as may be easily checked with the aid of computer calculations, and,
therefore also in~$(\pv{BG})_2$. In particular,
\begin{math}
  \pv J\malcev\op x^2=1\cl
\end{math}
does not contain $(\pv{BG})_2$. Below, we prove the stronger statement
that
\begin{math}
  \op 1\le x^n\cl\nsubseteq\pv A\malcev\op x^n=1\cl
\end{math}
(Corollary~\ref{c:<1LExn>-notin-Am1EQxn}).

For a language $L$, denote by $F(L)$ the set of all factors of words
from~$L$. Note that $F(L)$ is regular if so is~$L$. Given natural
numbers $k$ and~$\ell$, for shortness we denote by $w^{k+\ell*}$ the
set of all powers of the word $w$ whose exponent is of the form
$k+\ell n$ for some non-negative integer $n$.

\begin{Prop}
  \label{p:case2}
  Consider the following language over the alphabet $A=\{a,b,c\}$:
  \begin{displaymath}
    L_2=\Bigl(A^*\setminus F\bigl((abcacb)^*\bigr)\Bigr)
    \cup(abcacb)^{1+2*}.
  \end{displaymath}
  Then the syntactic ordered monoid of~$L_2$ belongs to~$\op 1\le
  x^2\cl$ and fails the pseudoidentity
  \begin{math}
    (xyzxzy)^{\omega+1}=(xyzxzy)^\omega.
  \end{math}
\end{Prop}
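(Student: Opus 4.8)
The plan is to establish the two assertions separately, both resting on combinatorial properties of the primitive word $w=abcacb$ and its periodic extension. Write $w^\infty$ for the one-sided infinite word $www\cdots$, so that $F\bigl((abcacb)^*\bigr)$ is exactly the set of finite factors of $w^\infty$, a word of period~$6$. For a factor $V=w^\infty[j:j+|V|]$ call $j\bmod 6$ its \emph{phase}. Two facts about $w$ are used repeatedly: since $w$ has length~$6$ and is primitive (its six cyclic rotations being pairwise distinct), every factor of length at least~$6$ occurs at a unique phase; and a factor of length $6s$ occurring at phase~$0$ is exactly $w^s$. Note also that $L_2=A^*\setminus\bigl(F((abcacb)^*)\setminus(abcacb)^{1+2*}\bigr)$ and that $L_2$ is regular, so $\mathrm{Synt}(L_2)$ is a finite ordered monoid.

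The combinatorial core is the claim that \emph{if $u\ne\varepsilon$ and $u^2\in F((abcacb)^*)$, then $6$ divides $|u|$.} I would prove it with the theorem of Fine and Wilf. The factor $u^2$ has period $|u|$ and inherits period~$6$; if $|u|\ge4$ and $d:=\gcd(|u|,6)<6$, then $u^2$ acquires the period~$d$, and propagating this period over six consecutive positions forces $w$ itself to have period~$d$. As $d$ is a proper divisor of~$6$, this contradicts the primitivity of $abcacb$ (when $d=1$ it already contradicts that no letter repeats consecutively in $w^\infty$). The finitely many short cases $|u|\le3$ are settled by listing the factors of $w^\infty$ of lengths $2,4,6$ and checking that none is a square. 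The converse inclusion is free: since $12m\equiv0\pmod6$, any factor of length $12m$ splits as $uu$.

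For $\mathrm{Synt}(L_2)\in\op 1\le x^2\cl$ I would verify the inequality $1\le x^2$ in the syntactic quasiorder, that is, $pq\in L_2\Rightarrow pu^2q\in L_2$ for all words $u,p,q$; I argue the contrapositive. Suppose $V:=pu^2q\notin L_2$, so $V\in F((abcacb)^*)$ is not an odd power of~$w$. The case $u=\varepsilon$ is immediate, so assume $u\ne\varepsilon$; then $6\mid|u|$, say $|u|=6m$, by the core fact. Fixing an occurrence $V=w^\infty[j_0:j_0+|V|]$ and using $12m\equiv0\pmod6$, the suffix $q$ re-enters $w^\infty$ at exactly the phase where the prefix $p$ ends, whence $pq=w^\infty[j_0:j_0+|pq|]\in F((abcacb)^*)$. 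It remains to exclude $pq$ being an odd power: if $pq=w^{2\ell+1}$, then $pq$ has length at least~$6$ and hence unique phase~$0$, which forces $j_0\equiv0\pmod6$; but then $V$ occurs at phase~$0$ with $|V|/6=2\ell+1+2m$ odd, so $V=w^{2(\ell+m)+1}$ would be an odd power, contradicting the choice of~$V$. Therefore $pq\in F((abcacb)^*)\setminus(abcacb)^{1+2*}$, i.e.\ $pq\notin L_2$, as required.

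The failure of $(xyzxzy)^{\omega+1}=(xyzxzy)^\omega$ is then immediate. Evaluating $x=a$, $y=b$, $z=c$ sends $xyzxzy$ to $abcacb=w$, so it suffices to see that $[w]^{\omega+1}\ne[w]^\omega$ in $\mathrm{Synt}(L_2)$. Writing $[w]^\omega=[w^m]$ for the idempotent power, one has $[w]^{\omega+1}=[w^{m+1}]$; since $w^k\in L_2$ precisely when $k$ is odd and $m,m+1$ have opposite parities, the empty context already separates $w^m$ from $w^{m+1}$, so $[w^m]\ne[w^{m+1}]$. I expect the combinatorial core and the phase bookkeeping in the contrapositive to be the only genuine work; the unifying point, and the reason the construction succeeds, is that once $6\mid|u|$ is available the insertion of the square $u^2$ shifts the exponent of any power of $w$ by the even amount $2m$, so deleting the even powers from $L_2$ is compatible with $1\le x^2$ yet is broken by the single extra factor $w$ that passes from $[w]^\omega$ to $[w]^{\omega+1}$.
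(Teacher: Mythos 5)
Your proof is correct, and its overall architecture coincides with the paper's: a combinatorial lemma showing that any nonempty square occurring inside a power of $abcacb$ has length divisible by $6$ and can be excised without disturbing the phase, then the verification of $1\le x^2$ in the syntactic quasiorder, then the parity argument separating $(xyzxzy)^{\omega}$ from $(xyzxzy)^{\omega+1}$ using the empty context. The one genuine difference lies in how the divisibility claim is established. The paper isolates the length-two prefix $v$ of $u$, observes that each two-letter factor of $(abcacb)^\infty$ occurs at a unique phase modulo $6$, and compares the two occurrences of $v$ inside $u^2$ at distance $|u|$; you instead apply the Fine--Wilf theorem to the periods $|u|$ and $6$ of $u^2$ (which applies once $|u|\ge 4$) and dispose of $|u|\le 3$ by inspecting the finitely many factors of lengths $2$, $4$, $6$. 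Both are sound. The paper's argument is more elementary and self-contained; yours is more mechanical and, because it only uses primitivity of the base word, offers a uniform route to the analogous divisibility claim for the word $(b^{n-1}a)^{n-1}(ab)^{n-1}a^2$ of Proposition~\ref{p:general-case}, where the paper instead has to locate the marker factor $a^2b^2$ -- though at the price of a larger finite check for short $u$. Your contrapositive packaging of the membership proof, and your remark that $m$ and $m+1$ have opposite parities (avoiding the need to identify $[w]^{\omega+1}$ with an odd power explicitly), are only superficial rearrangements of the paper's two-case argument.
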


To prove the first part of Proposition~\ref{p:case2}, we establish the
following two lemmas.

\begin{Lemma}
  \label{l:uu-in-context-(abcacb)*}
  Suppose that $x,u,y$ are words such that $xu^2y$ belongs to the
  language $(abcacb)^*$. Then $|u|$ is a multiple of~6 and
  $xy\in(abcacb)^*$.
\end{Lemma}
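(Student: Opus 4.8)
The plan is to treat $v=abcacb$ as a primitive word of length $6$ and to exploit the rigidity of the periodic structure of its powers. First I would record two elementary facts about $v$: it is primitive (it is not a proper power, as one checks against the candidate periods $1,2,3$), and consequently its six cyclic conjugates are pairwise distinct and each is itself primitive. In particular, every factor of a power $v^k$ of length $6$ is one of these conjugates, hence has minimal period exactly $6$. Writing $w=xu^2y=v^k$, the block $u^2$ is a factor of $v^k$ and therefore inherits the period $6$; being a square, it also has period $|u|$. The statement then splits into two claims, namely that $6\mid|u|$ and that $xy\in(abcacb)^*$, the second of which I would derive from the first.

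For the divisibility claim, assume $u$ is nonempty and set $p=|u|$ and $g=\gcd(p,6)$. The main tool is the theorem of Fine and Wilf: a word having periods $q_1,q_2$ and length at least $q_1+q_2-\gcd(q_1,q_2)$ has period $\gcd(q_1,q_2)$. For $p\ge3$ one checks directly that $2p\ge p+6-g$, so Fine and Wilf applies to $u^2$, with its two periods $p$ and $6$, and yields that $u^2$ has period $g$. If $6\nmid p$ then $g\in\{1,2,3\}$, a proper divisor of $6$; since $|u^2|=2p\ge6$, the length-$6$ prefix of $u^2$ then has period $g<6$ and is thus a proper power, contradicting the primitivity of the conjugate of $v$ that it must equal. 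Hence $6\mid p$ whenever $p\ge3$. The two remaining values $p\in\{1,2\}$ are disposed of by inspection: $v$ and its conjugates contain no two equal adjacent letters and no length-$4$ factor of the form $\alpha\beta\alpha\beta$, so no square $u^2$ with $|u|\le2$ occurs in $v^k$ at all. Combining the cases gives $6\mid|u|$.

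I expect this divisibility step to be the crux, the delicate point being that the length bound in Fine and Wilf is attained with equality precisely at $p\in\{3,4,5\}$, so one must be sure the theorem still applies there and must fall back on a direct argument for the genuinely short squares $p\in\{1,2\}$; primitivity of the length-$6$ conjugates is exactly what converts the ``period $g$'' conclusion into a contradiction. Granting $6\mid|u|$, the second claim is a clean consequence of periodicity: writing $|u|=6m$, the deleted central block $u^2$ has length $12m\equiv0\pmod6$, so the suffix $y$ begins at a position congruent modulo $6$ to the position immediately following $x$ in $v^k$. By the period $6$ of $v^k$, the letter of $v^k$ just after $x$ coincides with the first letter of $y$, and more generally concatenating $x$ with $y$ prolongs the same periodic word without breaking its phase. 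Thus $xy$ is a prefix of $v^k$ whose length $|x|+|y|=6(k-2m)$ is a multiple of $6$, which forces $xy=v^{k-2m}\in(abcacb)^*$ and completes the proof.
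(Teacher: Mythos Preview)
Your proof is correct. The second half, deducing $xy\in(abcacb)^*$ from $6\mid|u|$ via the period-$6$ structure, is essentially the same argument the paper gives.

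For the divisibility claim, however, your route is genuinely different from the paper's. The paper exploits a feature specific to the word $abcacb$: each of the six length-$2$ factors $ab,bc,ca,ac,cb,ba$ occurs at a unique position modulo~$6$ in $(abcacb)^\omega$. Taking the length-$2$ prefix of $u$ (after ruling out $|u|\le1$), its two occurrences at the start of each copy of $u$ in $u^2$ force $|x|\equiv|x|+|u|\pmod6$, hence $6\mid|u|$. This is an elementary and entirely ad hoc argument. Your approach instead invokes the Fine--Wilf theorem together with the primitivity of all cyclic conjugates of $abcacb$, handling $|u|\ge3$ uniformly and only falling back on inspection for $|u|\in\{1,2\}$. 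Your method is more systematic and would transfer verbatim to any primitive word in place of $abcacb$ (indeed, it is closer in spirit to what the paper does for the analogous Lemma in the case $n\ge3$, where a distinguished marker factor plays the role of your length-$2$ prefix); the paper's argument, by contrast, avoids any external periodicity lemma at the cost of being tailored to this particular word.
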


\begin{proof}
  We first note that the case where $u=1$ is obvious while it is
  impossible that $u$ has length $1$ since no square of a letter
  belongs to $F\big((abcacb)^*\bigr)$. Hence, we may consider the
  prefix of length~2 of~$u$, which we denote $v$. Then $v$ must be one
  of the words $ab,bc,ca,ac,cb,ba$. Let $p,q$ be words such that $pvq$
  belongs to~$(abcacb)^*$. Note that, whatever the value of~$v$, its
  first letter can only appear in one position within $abcacb$, so
  that $|p|$ is completely determined modulo~6 by the value of~$v$.
  For instance, if $v=ab$, then we must have $|p|\equiv0\pmod6$.
 
  Let $u=vw$. By the above, since $xvwvwy$ is a power of~$abcacb$, we
  conclude that
  \begin{math}
    |x|\equiv|xvw|\pmod 6,
  \end{math}
  which yields
  \begin{math}
    |u|=|vw|\equiv0\pmod6.
  \end{math}
  Thus, whichever position in~$abcacb$ the factor $u^2$ starts in the
  power $xu^2y$ of~$abcacb$, the factor $y$ starts exactly in the same
  position. Hence, $xy$ belongs to~$(abcacb)^*$.
\end{proof}

\begin{Lemma}
  \label{l:ML-in-1LExn-case2}
  The syntactic ordered monoid of $L_2$ satisfies the inequality $1\le
  x^2$.
\end{Lemma}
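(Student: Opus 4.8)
The plan is to unwind the definition of the syntactic quasiorder. Since the inequality $1\le x^2$ must hold under every substitution, it amounts to the assertion that, for every word $w$ and all words $p,q$ with $pq\in L_2$, we also have $pw^2q\in L_2$; this is exactly the statement that inserting a square never destroys membership in $L_2$. Write $v=abcacb$, $N=F(v^*)$, and $O=(abcacb)^{1+2*}$, so that $L_2=(A^*\setminus N)\cup O$. I would then split according to whether $pq\in A^*\setminus N$ or $pq\in O$.

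In the first case the point is that insertion cannot manufacture a factor out of a non-factor. If $pw^2q\notin N$, then $pw^2q\in A^*\setminus N\subseteq L_2$ and there is nothing to prove; otherwise $v^m=\alpha\,pw^2q\,\beta$ for some words $\alpha,\beta$ and some $m$, and applying Lemma~\ref{l:uu-in-context-(abcacb)*} with $x=\alpha p$, $u=w$, $y=q\beta$ yields $\alpha pq\beta\in v^*$. But then $pq$ is a factor of a power of $v$, contradicting $pq\notin N$. Hence $pw^2q\notin N$, so $pw^2q\in L_2$.

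The substantive case is $pq=v^\ell$ with $\ell$ odd. Again, if $pw^2q\notin N$ we are done, so suppose $pw^2q\in N$ and write $v^m=\alpha\,pw^2q\,\beta$; the goal is to show that $pw^2q$ is once more an \emph{odd} power of $v$. Lemma~\ref{l:uu-in-context-(abcacb)*} supplies $6\mid|w|$ together with $\alpha pq\beta=\alpha v^\ell\beta\in v^*$. Since $\ell\ge1$, the factor $v^\ell$ begins with $ab$, and $ab$ occurs in $v^*$ only at positions $\equiv0\pmod6$ (the very rigidity that drives Lemma~\ref{l:uu-in-context-(abcacb)*}), so $|\alpha|\equiv0\pmod6$. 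Consequently, writing $r=|p|\bmod6$, the inserted square $w^2$ occupies a stretch of $v^m$ beginning at position $\equiv r\pmod6$ and of length $2|w|\equiv0\pmod6$. By the uniqueness of the position modulo $6$ of each length-$2$ factor, this stretch is determined, forcing $w^2$ — and hence $w$ — to be a whole number of copies of the cyclic rotation $\rho_r$ of $v$ that begins at its $r$-th letter; explicitly $w=\rho_r^{s}$ with $s=|w|/6$. Writing $p=v^i s_r$ and $q=e_r v^{\ell-i-1}$, where $v=s_re_r$ and $\rho_r=e_rs_r$, a direct computation then collapses $pw^2q$ to $v^{\ell+2s}$.

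The crux, and the step I expect to require the most care, is this last alignment argument: one must show that the inserted square is forced to respect the period $6$ of $v^*$ and, crucially, contributes an \emph{even} number $2s$ of full blocks. This parity conservation is precisely what preserves the oddness of the exponent, so that $pw^2q=v^{\ell+2s}\in O\subseteq L_2$; it is also the structural reason why the set $O$ of odd powers is closed under square insertions. Everything else reduces to the rigidity already isolated in Lemma~\ref{l:uu-in-context-(abcacb)*}.
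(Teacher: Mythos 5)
Your proof is correct and follows the same overall strategy as the paper's: reduce the statement to closure of $L_2$ under insertion of squares, split according to which of the two pieces of $L_2$ contains $pq$, and in both cases invoke Lemma~\ref{l:uu-in-context-(abcacb)*} to pass from a factorization of $pw^2q$ inside a power of $abcacb$ to one of $pq$. Your first case is identical to the paper's. In the second case the two arguments diverge only in how they finish: the paper observes that $abcacb$ has no nontrivial overlaps with itself, so the flanking words $x,y$ in $xpu^2qy=(abcacb)^\ell$ must themselves be powers of $abcacb$, after which a one-line computation with exponents ($\ell-2\ell'=r+s+1+2k$) gives $pu^2q=(abcacb)^{1+2k+2\ell'}$. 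You instead pin down the insertion point modulo $6$ (via the rigidity of the factor $ab$), identify $w$ as a power of the conjugate $\rho_r=e_rs_r$ of $v=s_re_r$, and compute $pw^2q=v^{\ell+2s}$ directly. Both routes are valid and rest on the same rigidity of the word $abcacb$; the paper's is marginally shorter because it recycles the lemma's conclusion $xpqy\in(abcacb)^*$ and only needs arithmetic on exponents, whereas yours makes the mechanism of parity conservation (an even number $2s$ of inserted blocks) more explicit. The only detail to watch in your write-up is the degenerate alignment $r=0$, where $s_r$ is empty and $\rho_0=v$; the computation still goes through there.
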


\begin{proof}
  We must show that, if $p,q$ are words such that $pq\in L_2$, then
  $pu^2q\in L_2$ for every word $u\in A^*$.

  Suppose first that $pq$ belongs to the language %
  \begin{math}
    A^*\setminus F\bigl((abcacb)^*\bigr).
  \end{math}
  We claim that $pu^2q$ belongs to the same language, whence to~$L_2$.
  For this purpose, we argue by contradiction, assuming $pu^2q$ is a
  factor of some power of~$abcacb$, that is, there exist words $x,y$
  such that $xpu^2qy$ belongs to~$(abcacb)^*$. By
  Lemma~\ref{l:uu-in-context-(abcacb)*}, it follows that $xpqy$ also
  belongs to~$(abcacb)^*$, which contradicts the assumption that
  $pq$~does not belong to~$F\bigl((abcacb)^*\bigr)$.

  Hence, we may assume that $pq$ belongs to~$(abcacb)^{1+2*}$, so that
  there is an integer $k$ such that $pq=(abcacb)^{1+2k}$. If $pu^2q$
  is not in~$F\bigl((abcacb)^*\bigr)$, then it belongs to~$L_2$ and we
  are done. Thus, we assume that $pu^2q$ belongs
  to~$F\bigl((abcacb)^*\bigr)$ and we choose words $x,y$ such that
  \begin{math}
    xpu^2qy=(abcacb)^\ell
  \end{math}
  for some integer~$\ell$. By Lemma~\ref{l:uu-in-context-(abcacb)*},
  there is some integer $\ell'$ such that $|u|=6\ell'$ and
  \begin{math}
    xpqy=(abcacb)^{\ell-2\ell'}.
  \end{math}
  Since
  \begin{math}
    pq=(abcacb)^{1+2k},
  \end{math}
  and there are no nontrivial overlaps between the word $abcacb$ with
  itself, there exist integers $r,s$ such that $x=(abcacb)^r$ and
  $y=(abcacb)^s$. This yields the equality $\ell-2\ell'=r+s+1+2k$,
  that is,
  \begin{math}
    \ell-(r+s)=1+2k+2\ell',
  \end{math}
  which shows that
  \begin{math}
    pu^2q=(abcacb)^{1+2k+2\ell'}
  \end{math}
  is a word in $(abcacb)^{1+2*}$, whence also in~$L_2$.
\end{proof}

\begin{proof}[of Proposition~\ref{p:case2}]
  In view of Lemma~\ref{l:ML-in-1LExn-case2}, to complete the proof of
  Proposition~\ref{p:case2}, it remains to show that the syntactic
  monoid of~$L_2$ fails the pseudoidentity
  \begin{math}
    (xyzxzy)^{\omega+1}=(xyzxzy)^\omega.
  \end{math}
  Indeed, substituting the syntactic classes of $a,b,c$ respectively
  for the variables $x,y,z$, we obtain for $(xyzxzy)^{\omega+1}$ the
  syntactic class of a word of the form $(abcacb)^k$, where $k$~is
  odd, which belongs to~$L_2$, whereas for $\ell$~even,
  $(abcacb)^\ell$~does not belong to~$L_2$; hence, the value we obtain
  for $(xyzxzy)^\omega$ cannot be the same as
  for~$(xyzxzy)^{\omega+1}$.
\end{proof}

For $n\ge 3$, the argument is similar, but we need to work with a more
complicated word.

\begin{Prop}
  \label{p:general-case}
  Let $n\ge3$, $A=\{a,b\}$, $w=(b^{n-1}a)^{n-1}(ab)^{n-1}a^2$, and
  consider the following language:
  \begin{displaymath}
    L_n=\bigl(
    A^*\setminus F(w^*) \bigr) \cup w^{1+n*}.
  \end{displaymath}
  Then the syntactic ordered monoid of~$L_n$ belongs to~$\op 1\le
  x^n\cl$ and fails the pseudoidentity
  \begin{displaymath}
    \bigl((y^{n-1}x)^{n-1}(xy)^{n-1}x^2\bigr)^{\omega+1} %
    =\bigl((y^{n-1}x)^{n-1}(xy)^{n-1}x^2\bigr)^\omega.
  \end{displaymath}
\end{Prop}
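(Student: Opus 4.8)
The statement has two parts, and I expect them to mirror exactly the structure of the proof of Proposition~\ref{p:case2}, with the word $abcacb$ replaced by $w=(b^{n-1}a)^{n-1}(ab)^{n-1}a^2$ and the exponent pattern $1+2*$ replaced by $1+n*$. Accordingly, I would first establish the analogue of Lemma~\ref{l:uu-in-context-(abcacb)*}: if $x u^n y\in w^*$, then $|u|$ is a multiple of~$|w|=n^2+2n$ and $xy\in w^*$. Once this combinatorial lemma is in hand, the proof that $\mathrm{Synt}(L_n)$ satisfies $1\le x^n$ proceeds verbatim as in Lemma~\ref{l:ML-in-1LExn-case2}: one checks that inserting $u^n$ into $pq\in L_n$ keeps the word in $L_n$, splitting into the cases $pq\notin F(w^*)$ (where the insertion cannot create a factor of $w^*$, by the lemma) and $pq\in w^{1+n*}$ (where the lemma shows the exponent only changes by a multiple of $n$, keeping it in the residue class $1\bmod n$). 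The second part, failure of the displayed pseudoidentity, is then immediate exactly as in the proof of Proposition~\ref{p:case2}: substituting the syntactic classes of $a,b$ for $x,y$ turns $(y^{n-1}x)^{n-1}(xy)^{n-1}x^2$ into the syntactic class of $w$ itself, so $(\,\cdot\,)^{\omega+1}$ evaluates to a power $w^k$ with $k\equiv1\pmod n$ (hence in $L_n$) while $(\,\cdot\,)^\omega$ evaluates to a power with exponent $\equiv0\pmod n$ (hence, being a nontrivial power of $w$, not in $L_n$), so the two sides receive different syntactic values.

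The main obstacle is the combinatorial lemma, specifically two facts about $w$ that for $abcacb$ were almost trivial but here require a short argument. First, I must verify that $w$ has no nontrivial self-overlap (is unbordered / primitive with no internal repetition matching a prefix to a suffix), which is what lets me conclude in the membership proof that the surrounding words $x,y$ are themselves full powers of $w$. Note $w$ begins with $b$ and ends with $a$, which already rules out overlaps of length sharing the same boundary letter, and the specific block structure should prevent the rest; I would confirm this by inspecting where the initial segment $b^{n-1}a$ can recur inside $w$. Second, and this is the heart of the matter, I need the positional rigidity used in Lemma~\ref{l:uu-in-context-(abcacb)*}: every short factor of $u$ determines $|p|\bmod|w|$ uniquely, forcing $|u|\equiv0\pmod{|w|}$. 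For $abcacb$ this worked because each letter's first occurrence was positionally unique; for $w$ the letters $a,b$ repeat, so I must instead argue with a short \emph{factor} of $u$ (rather than a single letter) whose occurrences in $w$ are positionally rigid modulo $|w|$. Concretely, I would show that any factor of $u$ of bounded length sits in a unique position modulo $|w|$ inside $ww$, leveraging that $w$ contains the unique longest unary runs $b^{n-1}$ and $a^2$ (and $a^{\,?}$ at the seam) as landmarks; aligning on these runs pins down the offset.

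One point deserving care is the parity/divisibility bookkeeping at the end of the membership proof. After applying the combinatorial lemma I get $|u|=|w|\cdot\ell'$ and $xpqy=w^{\ell-n\ell'}$; combined with $pq=w^{1+nk}$ and the no-overlap fact giving $x=w^r$, $y=w^s$, the arithmetic yields $pu^nq=w^{1+nk+n\ell'}$, which lies in $w^{1+n*}$ because the exponent is $\equiv1\pmod n$. This is the exact analogue of the $\ell-(r+s)=1+2k+2\ell'$ computation in Lemma~\ref{l:ML-in-1LExn-case2}, now with $2$ replaced by $n$, and I would present it in the same compressed style. The only genuinely new labor is thus the no-overlap verification and the factor-rigidity argument for $w$; everything else transcribes the $n=2$ case with the obvious substitutions.
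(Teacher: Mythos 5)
Your plan follows the paper's route: the paper likewise states that everything transcribes from the $n=2$ case with the obvious substitutions and only presents the analogue of Lemma~\ref{l:uu-in-context-(abcacb)*}, namely Lemma~\ref{l:un-in-context-...}, asserting that $xu^ny\in w^*$ forces $|u|$ to be divisible by $|w|$ and $xy\in w^*$. Two corrections, though. First, $|w|=n(n-1)+2(n-1)+2=n^2+n$, not $n^2+2n$ (you seem to have imported the length of $t\,w\,t^{n-1}$ from the proof of Lemma~\ref{l:separate-J+mHn-from-1LEQxn}); the divisibility in the key lemma is by $n^2+n$. Second, your rigidity claim as stated --- that \emph{any} bounded-length factor of $u$ occupies a unique position modulo $|w|$ --- is false, since $ab$, $ba$, $bb$ all recur within $w$. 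The paper instead fixes the single landmark factor $a^2b^2$, which occurs in $w^2$ only at the seam between consecutive copies of $w$, and, crucially, first proves that $u^n$ (hence $u^2$) must contain $a^2b^2$: otherwise $u^n$ would be a factor of $a(b^{n-1}a)^{n-1}(ab)^{n-1}a^2b$, which is impossible for nonempty $u$. Writing $u^2=p\,a^2b^2\,q$ with $a^2b^2$ not a factor of $p$ and comparing the two factorizations
\begin{math}
  xp\cdot a^2b^2\cdot qu^{n-2}y=xup\cdot a^2b^2\cdot qu^{n-3}y
\end{math}
of $xu^ny$ then yields $|u|\equiv0\pmod{n^2+n}$. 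Your ``landmark'' idea points in exactly this direction, but the existence step --- that powers of a nonempty $u$ cannot avoid the landmark --- is the piece your sketch still needs to supply; with it, the rest of your outline (unborderedness of $w$, the two-case membership argument, and the exponent bookkeeping modulo $n$) matches the paper.
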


The proof of Proposition~\ref{p:general-case} proceeds along the same
lines of the above proof for Proposition~\ref{p:case2}. The only point
where there is an essential difference is in the analogue of
Lemma~\ref{l:uu-in-context-(abcacb)*}, and that is the only detail
which we present here. The role of the number 6 is now played by
\begin{math}
  n^2+n=|(b^{n-1}a)^{n-1}(ab)^{n-1}a^2|.
\end{math}

\begin{Lemma}
  \label{l:un-in-context-...}
  Let $n\ge3$,
  \begin{math}
    w=(b^{n-1}a)^{n-1}(ab)^{n-1}a^2,
  \end{math}
  and suppose that $x,u,y$ are words such that $xu^ny$ belongs
  to~$w^*$. Then $n^2+n$ divides~$|u|$ and $xy$ also belongs to~$w^*$.
\end{Lemma}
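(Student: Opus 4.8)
The plan is to mirror the structure of the proof of Lemma~\ref{l:uu-in-context-(abcacb)*}, replacing the length-$2$ synchronizing prefix used there by a synchronizing device adapted to the longer word $w$. Write $q=n^2+n=|w|$. Since a factor of a power $w^\ell$ starting at position $i$ depends only on $i\bmod q$, the two assertions reduce to the following: first, that if $u^n$ occurs as a factor of some $w^\ell$ with $u\ne1$, then $q\mid|u|$; and second, that once $q\mid|u|$, deleting the block $u^n$ glues the remaining prefix $x$ and suffix $y$ into a word of $w^*$. The second part is immediate: as $n|u|\equiv0\pmod q$, shifting all indices back by $n|u|$ leaves $w^\ell$ unchanged, so the suffix $y$ coincides letterwise with the factor occupying positions $[\,|x|,|x|+|y|-1\,]$, and therefore $xy$ is a prefix of a power of $w$ of length $\ell q-n|u|$, an element of $w^*$. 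All the work is in the first part.

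The central tool is the distribution of the maximal blocks $b^{n-1}$ inside powers of $w$. Because $n\ge3$, every run of $b$'s of length $n-1$ lies inside the prefix $(b^{n-1}a)^{n-1}$ of some copy of $w$, all other runs of $b$'s having length $1$; reading off positions, these long runs start exactly at the positions congruent to $0,n,2n,\dots,(n-2)n\pmod q$. Let $R$ be the set of starting positions of maximal $b^{n-1}$-runs; then $R\subseteq n\mathbb Z$, and modulo $q$ it is the set $S=\{kn:0\le k\le n-2\}$. Two consecutive elements of $R$ differ by $n$ or by $3n$, so every difference of two elements of $R$ is a multiple of $n$.

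Now suppose $u^n$ is a factor of some $w^\ell$, put $p=|u|$, and assume $p\ge4$. Then $u^n$ has period $p$ and length $np>3n$; since a factor meeting at most one maximal $b^{n-1}$-run has length at most $3n$, the block $u^n$ contains at least two such runs, and periodicity forces a long run at distance $p$ from a given one, whence $p\in R-R\subseteq n\mathbb Z$. In particular $n\mid p$; write $p=mn$. If $n\nmid p$ this already contradicts the existence of $u^n$. For $m\ge2$ we have $(n-1)p\ge q$, so $u^n$ exhibits two full periods' worth of long runs and the residue set $S$ must be invariant under translation by $p$ modulo $q$. In the coordinate $k$ this translation is $k\mapsto k+m\pmod{n+1}$, and $S$ corresponds to $\{0,1,\dots,n-2\}$, the complement of the two-element set $\{n-1,n\}$ in $\mathbb Z/(n+1)$; such a set is invariant only under translation by multiples of $n+1$. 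Hence $(n+1)\mid m$. For $p<q$ this means $m\le n$, which is impossible, ruling out all such $p$; for $p\ge q$ it gives $q=n(n+1)\mid mn=p$, exactly as required.

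It remains to dispose of the short and medium periods that escape the argument above, namely $p\in\{1,2,3\}$ together with $p=n$ (where $m=1$, so $np=n^2<q$ and the invariance argument does not apply). These are handled by a direct inspection of the runs of $w$: powers of $w$ contain no run of $a$'s longer than $2$ and no run of $b$'s longer than $n-1$, the longest period-$2$ factor has length $2n-1$, the longest period-$n$ factor has length at most $n^2-n+1$, and for $n\ge4$ the longest period-$3$ factor is far shorter than $3n$; in each case this maximal length is strictly below $np$, so no such $u^n$ can occur. I expect the main obstacle to be precisely this control of the short- and medium-period repetitions missed by the synchronization step, since it is there that one must use the specific shape of $w$ rather than a general spacing argument; the verification that the long $b^{n-1}$-runs sit at exactly the claimed residues, with no accidental extra long runs (which is where $n\ge3$ is used), is the other point needing care, and the arithmetic of $S$ in $\mathbb Z/(n+1)$ is what converts the geometric spacing of these runs into the divisibility $q\mid|u|$.
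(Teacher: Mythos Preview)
Your argument is essentially correct but takes a more involved route than the paper. Both rest on a synchronization device, yet the paper's choice of marker is much more efficient: it observes that the factor $a^2b^2$ occurs at exactly one residue modulo $q=n^2+n$ in~$w^\infty$ (at the junction between consecutive copies of~$w$), shows that for $u\ne1$ the word $u^2$ must already contain $a^2b^2$ (otherwise $u^n$ would lie inside the single word $a\,w\,b$, which has no nonempty $n$th-power factor), and then reads off $|u|\equiv0\pmod q$ directly from the two occurrences of $a^2b^2$ in $xu^ny$ at positions $|xp|$ and $|xup|$, where $u^2=p\cdot a^2b^2\cdot q'$. No case split on the size of $|u|$ is needed.

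You instead track the \emph{set} $S$ of residues of $b^{n-1}$-run starts, with $|S|=n-1$ rather than $1$, and this forces a three-stage argument: (i) extract $n\mid p$ from $R-R\subseteq n\mathbb Z$, (ii) prove $S+p\equiv S$ for $m\ge2$ to get $(n+1)\mid m$, and (iii) dispose of $p\in\{1,2,3,n\}$ by bounding maximal period-$p$ factors. These steps can all be made rigorous, but several details in your write-up need tightening: the bound ``at most $3n$'' for factors meeting a single maximal run is actually $3n+1$ (harmless for the conclusion); ``periodicity forces a long run at distance $p$'' tacitly uses that at least one of the shifts by $\pm p$ keeps the run inside $u^n$, which one must check; and the implication ``$(n-1)p\ge q\Rightarrow S+p=S$'' is really a Fine--Wilf statement (on the window of length $np-n+2$ the indicator $\mathbf 1_R$ has periods $p$ and $q$, and $n\mid\gcd(p,q)$ guarantees the Fine--Wilf bound is met). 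What the paper's single-marker trick with $a^2b^2$ buys is the complete elimination of both the arithmetic of~$S$ in $\mathbb Z/(n+1)\mathbb Z$ and the separate small-$p$ analysis.
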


\begin{proof}
  In case $u=1$, the result is immediate. Suppose $a^2b^2$ is not a
  factor of~$u^n$. Then, $u^n$ must be a factor of
  \begin{math}
    a(b^{n-1}a)^{n-1}(ab)^{n-1}a^2b,
  \end{math}
  which is easily seen to be impossible for a nonempty word~$u$.
  Hence, $a^2b^2$ must be a factor of~$u^n$ and, therefore, also
  of~$u^2$.

  Now, there is only one position where the factor $a^2b^2$ appears
  in~$w^2$, namely as %
  \begin{math}
    (b^{n-1}a)^{n-1}(ab)^{n-1}\cdot a^2b^2\cdot
    b^{n-2}a(b^{n-1}a)^{n-2}(ab)^{n-1}.
  \end{math}
  In $w^k$, two such consecutive positions are at distance $n^2+n$.
  Hence, whenever $pa^2b^2q$ is a power of~$w$, the value of $|p|$
  modulo $n^2+n$ is constant.
  
  Let $u^2=pa^2b^2q$, where $a^2b^2$ is not a factor of~$p$. By
  assumption, we know that $xu^ny\in w^*$. Since %
  \begin{math}
    xp\cdot a^2b^2\cdot qu^{n-2}y %
    =xup\cdot a^2b^2\cdot qu^{n-3}y
  \end{math}
  is a power of~$w$, we conclude from the preceding paragraph that
  \begin{math}
    |xp|\equiv|xup|\pmod {n^2+n},
  \end{math}
  which yields
  \begin{math}
    |u|\equiv0\pmod {n^2+n}.
  \end{math}
  Hence, in the factorization of~$xu^ny$ as a power of~$w$, the
  position in~$w$ where the factor $x$ ends must be followed, in a
  later occurrence of~$w$, precisely by the position where the factor
  $y$~starts. Thus, the factor $u^n$ may be removed to show that $xy$
  is also a power of~$w$.
\end{proof}

Combining Propositions~\ref{p:case2} and~\ref{p:general-case}, we
obtain the following result.

\begin{Cor}
  \label{c:<1LExn>-notin-Am1EQxn}
  For $n\ge2$, the pseudovariety of ordered monoids $\op 1\le x^n\cl$
  is not contained in
  \begin{math}
    \pv A\malcev\op x^n=1\cl.
  \end{math}
  In particular, $\op 1\le x^n\cl$~is contained in neither
  \begin{math}
    \pv J\malcev\op x^n=1\cl
  \end{math}
  nor
  \begin{math}
    \pv J*\op x^n=1\cl.
  \end{math}
\end{Cor}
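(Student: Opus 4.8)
The plan is to produce, for each $n\ge2$, a single pseudoidentity of the form $w^{\omega+1}=w^\omega$ that holds throughout $\pv A\malcev\op x^n=1\cl$ but fails in a member of $\op 1\le x^n\cl$ supplied by Propositions~\ref{p:case2} and~\ref{p:general-case}. The separating words are exactly the ones appearing there, namely $w=xyzxzy$ when $n=2$ and $w=(y^{n-1}x)^{n-1}(xy)^{n-1}x^2$ when $n\ge3$, so that the pseudoidentity $w^{\omega+1}=w^\omega$ is precisely the one those propositions show to fail in the syntactic monoids of $L_2$ and $L_n$.

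The one step with any real content is a general principle, which I would state and prove first: \emph{if $\pv H$ is a pseudovariety of groups and $w$ is a word with $w=1$ in $\pv H$, then $\pv A\malcev\pv H$ satisfies $w^{\omega+1}=w^\omega$.} To prove it, take $M\in\pv A\malcev\pv H$ together with a relational morphism $\tau\colon M\to H$, where $H\in\pv H$ and the preimage of each idempotent of $H$ lies in $\pv A$. Since $H$ is a group its only idempotent is $1$, so $S=\tau^{-1}(1)$ is an aperiodic subsemigroup of $M$. Given any assignment of the variables of $w$ to elements of $M$, let $m$ be the resulting value of $w$; choosing for each variable an element of its nonempty $\tau$-image and multiplying these along the letters of $w$ produces an element of $\tau(m)$ which, being the value of $w$ in $H$, equals $1$. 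Hence $1\in\tau(m)$, so $m\in S$, and aperiodicity of $S$ yields $m^{\omega+1}=m^\omega$. (Equivalently, this is an instance of the Pin--Weil basis theorem for Mal'cev products~\cite{Pin&Weil:1996a}, since in a group the idempotent-valued pseudowords are exactly those equal to $1$.)

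With this principle in hand the argument closes at once. The computations already carried out in the proof of Lemma~\ref{l:separate-J+mHn-from-1LEQxn} show that the separating word $w$ evaluates to $1$ in $\op x^n=1\cl$ in both ranges of $n$, so $\pv A\malcev\op x^n=1\cl$ satisfies $w^{\omega+1}=w^\omega$. Propositions~\ref{p:case2} and~\ref{p:general-case} exhibit an ordered monoid in $\op 1\le x^n\cl$ whose underlying monoid fails this very equality; as the equality is order-free, such a monoid cannot lie in $\pv A\malcev\op x^n=1\cl$. This gives $\op 1\le x^n\cl\nsubseteq\pv A\malcev\op x^n=1\cl$.

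The two remaining assertions follow from the chain $\pv J*\op x^n=1\cl\subseteq\pv J\malcev\op x^n=1\cl\subseteq\pv A\malcev\op x^n=1\cl$, where the first inclusion is Theorem~\ref{t:Steinberg-IJAC10}(3) and the second is monotonicity of the Mal'cev product together with $\pv J\subseteq\pv A$; the non-containment therefore descends to both smaller pseudovarieties. The only place demanding care is the principle of the second paragraph, and even there the difficulty is confined to setting up the relational morphism correctly, since once $w=1$ in $\op x^n=1\cl$ is known the aperiodicity of $\tau^{-1}(1)$ does the rest.
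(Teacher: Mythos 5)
Your proof is correct and follows essentially the same route as the paper: the paper's proof simply observes that the pseudoidentities $w^{\omega+1}=w^\omega$ (for the same two separating words $w$) hold in $\pv A\malcev\op x^n=1\cl$ because $w=1$ in $\op x^n=1\cl$, and then invokes Propositions~\ref{p:case2} and~\ref{p:general-case}. The only difference is that you spell out the relational-morphism argument behind that observation, which the paper leaves implicit.
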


\begin{proof}
  We observe that the pseudoidentities
  \begin{math}
    (xyzxzy)^{\omega+1}=(xyzxzy)^\omega
  \end{math}
  and
  \begin{displaymath}
    \bigl((y^{n-1}x)^{n-1}(xy)^{n-1}x^2\bigr)^{\omega+1} %
    =\bigl((y^{n-1}x)^{n-1}(xy)^{n-1}x^2\bigr)^\omega
  \end{displaymath}
  hold in the pseudovariety
  \begin{math}
    \pv A\malcev\op x^n=1\cl
  \end{math}
  respectively in case $n=2$ and $n>2$. Hence, it suffices to apply,
  respectively, Propositions~\ref{p:case2} and~\ref{p:general-case}.
\end{proof}

Another pseudovariety of interest is $(\pv{EJ})_n$, which is defined
to be the class of all finite monoids $M$ such that the submonoid
generated by $\{s^n: s\in M\}$ is \Cl J-trivial. This is clearly
contained in~$\pv{EJ}$, where only the submonoid generated by the
idempotents is required to be \Cl J-trivial, and satisfies the
pseudoidentity
\begin{math}
  x^{\omega+n}=x^\omega
\end{math}
so that
\begin{math}
  (\pv{EJ})_n\subseteq\pv B\op x^n=1\cl.
\end{math}

The diagram in Figure~\ref{fig:comparison} summarizes the known
inclusions between various pseudovarieties that we have considered so
far.
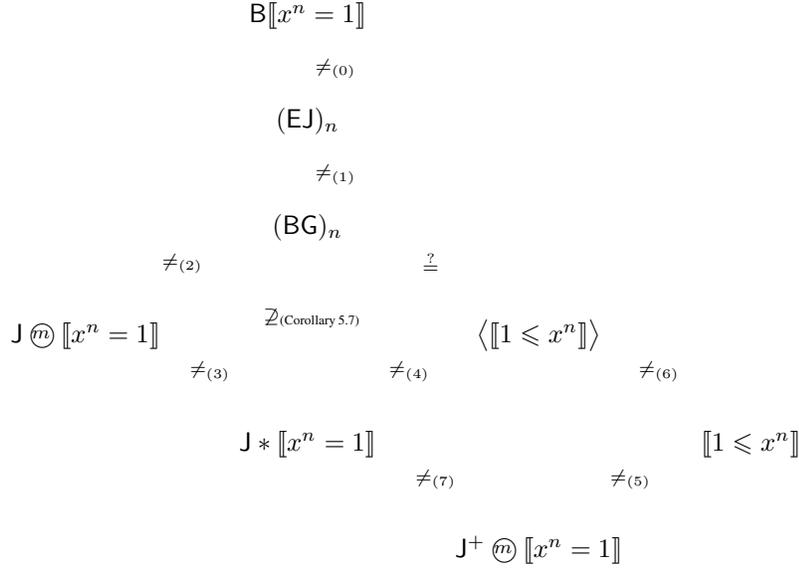
\begin{figure}[htp]
  \begin{displaymath}
    \xymatrix{
      & \pv B\op x^n=1\cl %
      \ar@{-}[d]^{\ne_{(0)}} %
      && \\
      & (\pv{EJ})_n %
      \ar@{-}[d]^{\ne_{(1)}} %
      && \\
      & (\pv{BG})_n %
      \ar@{-}[ld]_{\ne_{(2)}} %
      \ar@{-}[rd]^{\stackrel?=} %
      && \\
      \pv J\malcev\op x^n=1\cl %
      \ar@{-}[rd]^{\ne_{(3)}} %
      \ar@{..}[rr]^{\not\supseteq_{\text{(Corollary~\ref{c:<1LExn>-notin-Am1EQxn})}}} %
      && \bigl\langle\op 1\le x^n\cl\bigr\rangle %
      \ar@{-}[ld]_{\ne_{(4)}} %
      \ar@{-}[rd]^{\ne_{(6)}} %
      & \\
      & \pv J*\op x^n=1\cl %
      \ar@{-}[rd]^{\ne_{(7)}} %
      && \op 1\le x^n\cl %
      \ar@{-}[ld]_{\ne_{(5)}} %
      \\
      && \pv J^+\malcev\op x^n=1\cl }
  \end{displaymath}
  \caption{Comparison of several pseudovarieties}
  \label{fig:comparison}
\end{figure}
We next justify the strict inclusions depicted in the diagram in case $n\ge2$.
\begin{itemize}
\item[(0)]\label{item:ineq0} We define a monoid with zero by the
  following presentation:
  \begin{align*}
    M=\langle
    a,b:\ %
    &a^nb^na^n=a^n, %
      b^na^nb^n=b^n, \\
    &a^{n+1}=b^{n+1}=ab^ia=ba^ib=0\ (1\le i<n)
  \rangle.
  \end{align*}
  A simple calculation shows that $M$ has three regular \Cl J-classes,
  two containing only the idempotents $1$ and $0$, respectively, and
  the third one containing the idempotents $a^ib^na^{n-i}$ and
  $b^ia^nb^{n-i}$ ($1\le i\le n$). The product of any two distinct
  idempotents different from~$1$ is~$0$, so that $M$ belongs
  to~\pv{BG}, and $M$~is aperiodic, whence
  \begin{math}
    M\in\pv B\op x^n=1\cl.
  \end{math}
  On the other hand,
  \begin{math}
    (a^nb^n)^\omega=a^nb^n\ne b^na^n=(b^na^n)^\omega,
  \end{math}
  which shows that $M$ does not belong
  to~$(\pv{EJ})_n$.
\item[(1)] Consider the monoid with zero given by the presentation
  \begin{displaymath}
    \qquad\quad
    M=\langle a,b: %
    a^nba^n=a^n, %
    ba^nb=b, %
    ba^ib=0\ (0\le i < n), %
    a^{n+1}=0 \rangle.
  \end{displaymath}
  It is easy to see that $M$ consists of the elements $1$, $a^i$
  ($1\le i<n$), $0$, which form singleton \Cl J-classes, together
  with $a^iba^j$ ($0\le i,j\le n$), which constitutes a \Cl J-class
  whose idempotents are the elements $a^iba^j$ for which $i+j=n$. The
  $n$th powers are the idempotents and $a^n$, and form a
  submonoid of~$M$ which is \Cl J-trivial, that is, $M$ belongs
  to~$(\pv{EJ})_n$. On the other hand,
  \begin{math}
    M\notin(\pv{BG})_n
  \end{math}
  since the idempotents $a^nb$ and $ba^n$ are distinct.
\item[(2),(4)] These follow from
  Corollary~\ref{c:<1LExn>-notin-Am1EQxn}.
\item[(3)] See Remark~\ref{r:J*H-vs-JmH}.
\item[(5)] This follows from Lemma~\ref{l:separate-J+mHn-from-1LEQxn}.
  Alternatively, it also follows from (4) since, for the two other
  sides of the diamond involving the inclusions (4) and (5), one goes
  up by taking the pseudovariety of monoids generated by a
  pseudovariety of ordered monoids.
\item[(6)] The equality %
  \begin{math}
    \bigl\langle\op 1\le x^n\cl\bigr\rangle=\op 1\le x^n\cl
  \end{math}
  means that, for every
  \begin{math}
    (M,{\le})\in\op 1\le x^n\cl,
  \end{math}
  we have
  \begin{math}
    (M,{=})\in\op 1\le x^n\cl,
  \end{math}
  so that
  \begin{math}
    \op 1\le x^n\cl=\op x^n=1\cl,
  \end{math}
  which contradicts~(5).
\item[(7)] The argument is similar to that given for~(6) and is
  omitted.
\end{itemize}

For (4), we may also prove the following stronger result.

\begin{Prop}
  \label{p:<1LExn>-not-J*V}
  Whenever $n\ge2$, there is no pseudovariety \pv V such that %
  \begin{math}
    \bigl\langle\op 1\le x^n\cl\bigr\rangle %
    =\pv J*\pv V.
  \end{math}
\end{Prop}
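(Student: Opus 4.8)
**Proposal for proving Proposition~\ref{p:<1LExn>-not-J*V}.**

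The plan is to argue by contradiction, deriving an algebraic obstruction from the assumed equality $\langle\op 1\le x^n\cl\rangle=\pv J*\pv V$. First I would identify what $\pv V$ must be. Since the trivial-order embedding gives $\pv V\subseteq\pv J*\pv V=\langle\op 1\le x^n\cl\rangle$, every monoid in $\pv V$, ordered trivially, satisfies $1\le x^n$, hence satisfies $x^n=1$; thus $\pv V$ is a pseudovariety of groups, say $\pv V=\pv H$ with $\pv H\subseteq\op x^n=1\cl$. Moreover $\langle\op 1\le x^n\cl\rangle$ contains $\op x^n=1\cl$ (the latter being the trivially-ordered version, by diagram edge~(6)), so $\op x^n=1\cl\subseteq\pv J*\pv H$, which forces $\op x^n=1\cl\subseteq\pv H$ since $\pv J*\pv H$ contains no groups beyond those in $\pv H$; combined with $\pv H\subseteq\op x^n=1\cl$ this gives $\pv H=\op x^n=1\cl$. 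So the only candidate is $\pv V=\op x^n=1\cl$, and it suffices to refute $\langle\op 1\le x^n\cl\rangle=\pv J*\op x^n=1\cl$.

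Next I would invoke the work already done in the excerpt. By Corollary~\ref{c:<1LExn>-notin-Am1EQxn}, $\op 1\le x^n\cl\nsubseteq\pv J*\op x^n=1\cl$, and since passing to the generated pseudovariety of monoids is monotone and $\pv J*\op x^n=1\cl$ is already a pseudovariety of monoids, we get $\langle\op 1\le x^n\cl\rangle\nsubseteq\pv J*\op x^n=1\cl$. This single non-containment already contradicts the assumed equality, completing the argument. Concretely, the separating witness is the syntactic monoid of the language $L_2$ (for $n=2$) or $L_n$ (for $n\ge3$) constructed in Propositions~\ref{p:case2} and~\ref{p:general-case}: it lies in $\op 1\le x^n\cl$, hence in $\langle\op 1\le x^n\cl\rangle$, but fails the relevant pseudoidentity $\bigl((y^{n-1}x)^{n-1}(xy)^{n-1}x^2\bigr)^{\omega+1}=\bigl((y^{n-1}x)^{n-1}(xy)^{n-1}x^2\bigr)^\omega$ (respectively $(xyzxzy)^{\omega+1}=(xyzxzy)^\omega$), which holds throughout $\pv J*\op x^n=1\cl$ because it holds in the larger $\pv A\malcev\op x^n=1\cl$.

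The only genuinely delicate step is the reduction establishing $\pv V=\op x^n=1\cl$; the rest is a direct appeal to already-proved separations. The subtlety lies in arguing that a semidirect product $\pv J*\pv H$ with $\pv H$ a group pseudovariety contains no groups outside $\pv H$, so that the group content of $\langle\op 1\le x^n\cl\rangle$ pins down $\pv H$ exactly. I expect this to be the main obstacle, and I would handle it by noting that the maximal subgroups of any monoid in $\pv J*\pv H$ lie in $\pv H$ — since $\pv J$ is aperiodic and the semidirect product of an aperiodic pseudovariety with $\pv H$ has all its subgroups in $\pv H$ — while $\langle\op 1\le x^n\cl\rangle$ contains the trivially-ordered groups of $\op x^n=1\cl$, forcing $\op x^n=1\cl\subseteq\pv H$. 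Once $\pv V$ is thus determined, the contradiction follows immediately from Corollary~\ref{c:<1LExn>-notin-Am1EQxn}.
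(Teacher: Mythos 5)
There is a genuine gap at the very first step. You claim that $\pv V\subseteq\pv J*\pv V=\bigl\langle\op 1\le x^n\cl\bigr\rangle$ implies that every monoid of~$\pv V$, ordered trivially, satisfies $1\le x^n$ and hence $x^n=1$. That inference is false: membership in the \emph{generated pseudovariety of monoids} $\bigl\langle\op 1\le x^n\cl\bigr\rangle$ does not mean that the monoid, under the trivial order, satisfies the inequality. Under the trivial order, $1\le x^n$ is exactly $x^n=1$, so your claim amounts to $\bigl\langle\op 1\le x^n\cl\bigr\rangle\subseteq\op x^n=1\cl$, which is badly false: since $1\le x$ entails $1\le x^n$, we have $\pv J=\langle\pv J^+\rangle\subseteq\bigl\langle\op 1\le x^n\cl\bigr\rangle$, and nontrivial \Cl J-trivial monoids are not groups. (The paper's justification of item~(6) in Figure~\ref{fig:comparison} turns on precisely this distinction.) So your argument that $\pv V$ must be a pseudovariety of groups --- the reduction you yourself flag as the delicate point --- does not go through, and without it the appeal to Corollary~\ref{c:<1LExn>-notin-Am1EQxn} cannot be made.

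The paper closes this gap differently: from $\pv J*\pv V=\bigl\langle\op 1\le x^n\cl\bigr\rangle\subseteq(\pv{BG})_n\subseteq\pv{BG}$ (Proposition~\ref{p:upperbound-1LEQxn}) it deduces $\pv{Sl}\nsubseteq\pv V$, since otherwise $\pv{Sl}*\pv{Sl}\subseteq\pv{BG}$, which fails for the two-element left-zero semigroup with an adjoined identity; and a pseudovariety of monoids not containing \pv{Sl} consists of groups. Once that is in place, the rest of your argument --- $\pv V=(\pv J*\pv V)\cap\pv G=\bigl\langle\op 1\le x^n\cl\bigr\rangle\cap\pv G=\op x^n=1\cl$, using that subgroups of members of $\pv J*\pv V$ lie in~$\pv V$, followed by the contradiction with Corollary~\ref{c:<1LExn>-notin-Am1EQxn} --- is essentially the paper's and is sound. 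Note only that the inclusion $\pv V\subseteq\op x^n=1\cl$ should likewise not be obtained from the trivial-order argument; it follows from $\pv V\subseteq(\pv{BG})_n$ together with the pseudoidentity $x^{\omega+n}=x^\omega$.
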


\begin{proof}
  Assume to the contrary that there is such a pseudovariety \pv V. In
  particular, we have
  \begin{math}
    \pv J*\pv V\subseteq\pv{BG}.
  \end{math}
  If
  \begin{math}
    \pv{Sl}\subseteq\pv V,
  \end{math}
  then it follows that
  \begin{math}
    \pv{Sl}*\pv{Sl}\subseteq\pv{BG}.
  \end{math}
  However, it is easy to see that the monoid consisting of the
  two-element left-zero semigroup with an identity adjoined satisfies
  the identities defining $\pv{Sl}*\pv{Sl}$
  \cite[Exercise~10.3.7]{Almeida:1994a}, while it is not in~\pv{BG}.
  Hence, \pv{Sl} is not contained in the pseudovariety of monoids \pv
  V, which implies that \pv V is a pseudovariety of groups. On the
  other hand, we know that %
  \begin{math}
    (\pv J*\pv V)\cap\pv G=\pv V\cap\pv G
  \end{math}
  (see \cite[Proposition~10.1.7]{Almeida:1994a}). We conclude that
  \begin{displaymath}
    \pv V %
    =\pv V\cap\pv G %
    =(\pv J*\pv V)\cap\pv G %
    =\bigl\langle\op 1\le x^n\cl\bigr\rangle\cap\pv G %
    =\op x^n=1\cl,
  \end{displaymath}
  and so
  \begin{math}
    \op 1\le x^n\cl\subseteq\pv J*\pv V=\pv J*\op x^n=1\cl,
  \end{math}
  which contradicts Corollary~\ref{c:<1LExn>-notin-Am1EQxn}.
\end{proof}

We can also prove the following result for the Mal'cev product.

\begin{Prop}
  \label{p:when-<1LExn>=JmV}
  If there is some pseudovariety of monoids \pv V such that
  \begin{math}
    \bigl\langle\op 1\le x^n\cl\bigr\rangle=\pv J\malcev\pv V,
  \end{math}
  then %
  \begin{math}
    \pv J\malcev\op x^n=1\cl %
    \subseteq\bigl\langle\op 1\le x^n\cl\bigr\rangle.
  \end{math}
\end{Prop}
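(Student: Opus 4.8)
The plan is to reduce the desired inclusion to the single containment $\op x^n=1\cl\subseteq\pv V$. Indeed, the Mal'cev product $\pv J\malcev(-)$ is monotone in its second argument, so once we know $\op x^n=1\cl\subseteq\pv V$ we immediately obtain
\begin{math}
  \pv J\malcev\op x^n=1\cl\subseteq\pv J\malcev\pv V=\bigl\langle\op 1\le x^n\cl\bigr\rangle,
\end{math}
which is exactly the conclusion. Thus the entire argument is directed at producing enough groups in~\pv V.

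To establish $\op x^n=1\cl\subseteq\pv V$, I would argue at the level of groups. First, every finite group $G$ with $x^n=1$, regarded as an ordered monoid under the equality order, satisfies $1\le x^n$; hence its underlying monoid lies in $\langle\op 1\le x^n\cl\rangle$, and since all such $G$ are groups we get $\op x^n=1\cl\subseteq\langle\op 1\le x^n\cl\rangle\cap\pv G$. Using the hypothesis, this intersection equals $(\pv J\malcev\pv V)\cap\pv G$. It therefore remains to identify the groups lying in a Mal'cev product of the form $\pv J\malcev\pv V$.

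The key lemma I would prove (or invoke) is that $\pv J\malcev(-)$ adds no new groups, namely $(\pv J\malcev\pv V)\cap\pv G=\pv V\cap\pv G$. One inclusion is immediate, as $\pv V\subseteq\pv J\malcev\pv V$ via the identity relational morphism, whose idempotent preimages are trivial. For the other, let $G$ be a group in $\pv J\malcev\pv V$, witnessed by a relational morphism $\mu\colon G\to M$ with $M\in\pv V$ and every idempotent preimage in~\pv J. Since a nonempty subsemigroup of a finite group is a subgroup and a \Cl J-trivial group is trivial, each nonempty preimage $e\mu^{-1}$ of an idempotent~$e$ equals $\{1\}$. Writing $R\le G\times M$ for the graph of~$\mu$, whenever $(g,m),(g',m)\in R$ the product $(g,m)\,(g',m)^{\omega-1}=\bigl(g(g')^{-1},m^\omega\bigr)$ again lies in~$R$, so $g(g')^{-1}\in(m^\omega)\mu^{-1}=\{1\}$ and hence $g=g'$; thus the second projection is injective on~$R$. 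Consequently $R$ embeds into $M\in\pv V$, and $G$, being a quotient of~$R$ under the first projection, lies in~\pv V. This group-theoretic step is the one I expect to be the main obstacle, since it is the only place where the internal structure of the Mal'cev product is exploited.

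Combining these ingredients yields
\begin{math}
  \op x^n=1\cl\subseteq(\pv J\malcev\pv V)\cap\pv G=\pv V\cap\pv G\subseteq\pv V,
\end{math}
and the monotonicity observation of the first paragraph then gives $\pv J\malcev\op x^n=1\cl\subseteq\pv J\malcev\pv V=\langle\op 1\le x^n\cl\rangle$, completing the proof.
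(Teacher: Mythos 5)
Your proposal is correct and follows the same overall route as the paper: reduce the statement via monotonicity of $\pv J\malcev(-)$ to the single containment $\op x^n=1\cl\subseteq\pv V$, obtain $\op x^n=1\cl\subseteq(\pv J\malcev\pv V)\cap\pv G$ from the hypothesis, and then prove the key lemma $(\pv J\malcev\pv V)\cap\pv G=\pv V\cap\pv G$. The only genuine divergence is in the hard inclusion of that lemma. The paper takes the canonical factorization of the relational morphism $\mu$ as a submonoid of $G\times V$ with projections $\varphi,\psi$, invokes the fact that a surjection from a finite monoid onto a group restricts to a surjection from some subgroup $H$ (citing Rhodes--Steinberg, Proposition~4.1.44), and then checks that $\psi|_H$ has trivial group kernel, so $G=\varphi(H)$ is a quotient of $K=\psi(H)\le V$. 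You instead show directly that the second projection is injective on the \emph{whole} graph $R$, via $(g,m)(g',m)^{\omega-1}=\bigl(g(g')^{-1},m^\omega\bigr)\in R$ together with $\mu^{-1}(m^\omega)=\{1\}$, so that $R$ embeds in $M$ and $G=\varphi(R)$ is already a quotient of a member of~\pv V. This is a mild but real simplification: it avoids the subgroup-lifting fact entirely and needs only that a nonempty \Cl J-trivial subsemigroup of a finite group is trivial. Both arguments are sound and yield the same conclusion.
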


\begin{proof}
  Suppose that the equality %
  \begin{math}
    \bigl\langle\op 1\le x^n\cl\bigr\rangle=\pv J\malcev\pv V %
  \end{math}
  holds. It follows that
  \begin{equation}
    \label{eq:when-<1LExn>=JmV}
    \op x^n=1\cl %
    =\pv G\cap\bigl\langle\op 1\le x^n\cl\bigr\rangle %
    =(\pv J\malcev\pv V)\cap\pv G %
    =\pv V\cap\pv G,
  \end{equation}
  where only the last equality remains to be justified. The inclusion
  $\supseteq$ is a consequence of
  \begin{math}
    \pv V\subseteq\pv J\malcev\pv V.
  \end{math}
  For the reverse inclusion, suppose that $G$~is a group from~$\pv
  J\malcev\pv V$. Then, there is a relational morphism $\mu:G\to V$
  onto some $V\in\pv V$ such that, for every idempotent $e$ from~$V$,
  $\mu^{-1}(e)$ is a semigroup from~\pv J. Since $G$~is a group and
  $\pv J\cap\pv G$ is the trivial pseudovariety, consisting only of
  singleton monoids, we deduce that $\mu^{-1}(e)=\{1\}$ for every
  idempotent $e$ from~$V$. Consider now the canonical factorization
  of~$\mu$: $\mu$ may be viewed as a submonoid of $G\times V$; we
  denote $\varphi$ and $\psi$ respectively the restrictions to $\mu$
  of the first and second component projections of the product
  $G\times V$; then, as a relation, $\mu=\varphi^{-1}\psi$. Since
  $\varphi$ is onto, there is a subgroup $H$ of~$\mu$ such that
  $\varphi(H)=G$ (see, for instance,
  \cite[Proposition~4.1.44]{Rhodes&Steinberg:2009qt}). Consider the
  subgroup $K=\psi(H)$ of~$V$. Note that
  \begin{displaymath}
    \psi^{-1}(1_K) %
    =\{(g,1_K)\in\mu:g\in G\} %
    =\mu^{-1}(1_K)\times\{1_K\} %
    =\{(1,1_K)\}.
  \end{displaymath}
  Thus, the group kernel of the homomorphism $\psi$ is trivial, so
  that $H$ and $K$ are isomorphic, whence $G$ belongs to~\pv V since
  so does~$K$.

  From (\ref{eq:when-<1LExn>=JmV}), we know that $\op x^n=1\cl$ is
  contained in~\pv V, which finally entails the required inclusion %
  \begin{math}
    \pv J\malcev\op x^n=1\cl %
    \subseteq\bigl\langle\op 1\le x^n\cl\bigr\rangle.
  \end{math}
\end{proof}

Although we are interested mainly in the comparison of several
pseudovarieties and the computation of
\begin{math}
  \bigl\langle\op 1\le x^n\cl\bigr\rangle
\end{math}
in case $n$~is an integer with $n\ge 2$, the cases
$n=1$ and $n=\omega$ are also of great interest. In fact, they have
deserved considerable attention in the literature.

The case $n=1$ is that of the pseudovariety
\begin{math}
  \pv J^+=\op 1\le x\cl.
\end{math}
It has been shown to be equivalent to a celebrated theorem of
\cite{Simon:1975} that
\begin{math}
  \langle \pv J^+\rangle=\pv J.
\end{math}
A direct algebraic proof of this fact has been given
by~\cite{Straubing&Therien:1988a}. In this case, we also have
\begin{math}
  \pv B\op x=1\cl\ne(\pv{EJ})_1=(\pv{BG})_1=\pv J=\pv J*\op x=1\cl.
\end{math}

In case $n=\omega$, we know that
\begin{math}
  \pv J*\pv G=\pv{BG}
\end{math}
(see \cite{Pin:1995}) and
\begin{math}
  \pv J^+\malcev\pv G=\op 1\le x^\omega\cl
\end{math}
\cite[Theorem~2.7]{Pin&Weil:1994c}. There are proofs of these facts in
the literature that depend on a deep result of~\cite{Ash:1991}. In the
case of the first equality, an alternative ``constructive'' proof can
be found in~\cite{Auinger&Steinberg:2005c}. Hence, by
Corollary~\ref{c:BPolH}, the equality
\begin{math}
  \langle\op 1\le x^\omega\cl\rangle=\pv{BG}
\end{math}
holds.

\section{Algebraically provable inequalities}
\label{sec:provable-ineqs}

An inequality $u'\le v'$ is said to be a \emph{direct consequence} of
the inequality $u\le v$ if $u,v\in\Om AM$, $u',v'\in\Om BM$, and there
is a continuous homomorphism
\begin{math}
  \varphi:\Om AM\to\Om BM
\end{math}
such that
$u'=\varphi(u)$ and $v'=\varphi(v)$.

By an \emph{algebraic proof} of an inequality $u\le v$ from a set
$\Sigma$ of inequalities we mean a pair of finite sequences of
pseudowords $(x_iy_iz_i)_{i=1,\ldots,m}$ and $(t_i)_{i=1,\ldots,m}$
such that $u=x_1y_1z_1$, $v=x_mt_mz_m$, each inequality $y_i\le t_i$
is a direct consequence of some inequality from~$\Sigma$, and
\begin{math}
  x_it_iz_i=x_{i+1}y_{i+1}z_{i+1}
\end{math}
($i=1,\ldots,m-1$). In case there exists such a sequence, we also say
that the inequality $u\le v$ is \emph{algebraically provable}
from~$\Sigma$. A pseudoidentity $u=v$ is \emph{algebraically provable}
from~$\Sigma$ if both inequalities $u\le v$ and $v\le u$ have that
property.

Note that, in particular, $u\le v$ is algebraically provable from
$1\le x^n$ if and only if $v$ may be obtained from $u$ by a finite
sequence of insertions of factors of the form $w^n$.

Since our aim is to show that $\langle\op 1\le
x^n\cl\rangle=(\pv{BG})_n$ and we already know that the inclusion from
left to right holds, by Reiterman's theorem,
see~\cite{Reiterman:1982}, this amounts to showing that every
pseudoidentity valid in the pseudovariety~$\langle\op 1\le
x^n\cl\rangle$ is also valid in~$(\pv{BG})_n$. The following
proposition gives a key connection between inequalities provable
from~$1\le x^n$ and pseudoidentities valid in~$(\pv{BG})_n$.

\begin{Prop}
  \label{p:pseudoidentities-for-BGn}
  The pseudovariety $(\pv{BG})_n$ satisfies the following
  pseudoidentities:
  \begin{enumerate}[(a)]
  \item\label{item:pseudoidentities-for-BGn-1} %
    \begin{math}
      v^{\omega+1}=u^{\omega+1} v^\omega=v^\omega u^{\omega+1}
    \end{math}
    whenever the inequality $u\le v$ is algebraically provable from
    $1\le x^n$;
  \item\label{item:pseudoidentities-for-BGn-2} %
    \begin{math}
      v^\omega=u^\omega v^\omega=v^\omega u^\omega
    \end{math}
    whenever the inequality $u\le v$ is algebraically provable from
    $1\le x^n$;
  \item\label{item:pseudoidentities-for-BGn-3}
    \begin{math}
      u^{\omega+1}=v^{\omega+1}
    \end{math}
    whenever the pseudoidentity $u=v$ is algebraically provable from
    $1\le x^n$.
  \end{enumerate}
\end{Prop}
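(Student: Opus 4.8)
The plan is to reduce everything to the single-insertion case, which is already handled by Corollary~\ref{c:pseudoidentities-for-BGn}, and then to propagate the resulting identities along an algebraic proof by induction on the number of insertions. Recall from the observation following the definition of algebraic proof that $u\le v$ is algebraically provable from $1\le x^n$ precisely when $v$ arises from $u$ by a finite sequence of insertions of factors of the form $w^n$. A single such insertion turns a pseudoword $xz$ into $xy^nz$, and for this atomic step Corollary~\ref{c:pseudoidentities-for-BGn} yields, in $(\pv{BG})_n$, exactly
\[
(xy^nz)^{\omega+1}=(xz)^{\omega+1}(xy^nz)^\omega=(xy^nz)^\omega(xz)^{\omega+1}
\]
and
\[
(xy^nz)^\omega=(xz)^\omega(xy^nz)^\omega=(xy^nz)^\omega(xz)^\omega,
\]
which are precisely statements (\ref{item:pseudoidentities-for-BGn-1}) and (\ref{item:pseudoidentities-for-BGn-2}) for a one-step proof. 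The case of zero insertions ($u=v$) is trivial, so this atomic case is all the base data we need.

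First I would prove (\ref{item:pseudoidentities-for-BGn-1}) and (\ref{item:pseudoidentities-for-BGn-2}) \emph{simultaneously} by induction on the number $k$ of insertions, the essential point being that the two families feed into one another and cannot be separated. Splitting an algebraic proof of $u\le v$ as $u\le w\le v$, where $u\le w$ uses $k-1$ insertions and $w\le v$ is a single insertion, I would assume (a) and (b) for $u\le w$ and invoke the atomic case for $w\le v$. For the left-hand form of (a) the computation is
\[
v^{\omega+1}=w^{\omega+1}v^\omega=(u^{\omega+1}w^\omega)v^\omega=u^{\omega+1}(w^\omega v^\omega)=u^{\omega+1}v^\omega,
\]
where the inner equalities use (a) for $u\le w$ and the absorption $w^\omega v^\omega=v^\omega$ supplied by (b) for the single step $w\le v$. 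The left-hand form of (b) is even shorter, reading $v^\omega=w^\omega v^\omega=(u^\omega w^\omega)v^\omega=u^\omega v^\omega$; the right-hand forms follow by the evident left--right symmetry of the whole scheme. This is the crux, and the only genuine subtlety: one must drag the idempotent identity (b) through the induction in order to absorb the spurious middle factor $w^\omega$ left behind by the inductive hypothesis for (a), which is exactly why (a) and (b) have to be established in tandem.

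Finally, (\ref{item:pseudoidentities-for-BGn-3}) follows by applying the first two parts in both directions. If $u=v$ is algebraically provable, then so are $u\le v$ and $v\le u$, and part (b) gives $v^\omega=u^\omega v^\omega$ from the former together with $u^\omega=u^\omega v^\omega$ from the latter, whence $u^\omega=v^\omega$. Feeding this into part (a) for $u\le v$ yields $v^{\omega+1}=u^{\omega+1}v^\omega=u^{\omega+1}u^\omega=u^{\omega+1}$, as required. Apart from the bookkeeping in the inductive step, every equality is a direct appeal to Corollary~\ref{c:pseudoidentities-for-BGn} and to the characterization of algebraic provability as iterated $n$-power insertion.
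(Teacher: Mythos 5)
Your proof is correct and follows essentially the same route as the paper's: induction along the chain of single $n$-power insertions, with Corollary~\ref{c:pseudoidentities-for-BGn} supplying the atomic step, and part~(\ref{item:pseudoidentities-for-BGn-3}) obtained by combining the first two parts in both directions. The only cosmetic difference is that the paper runs the induction forward, proving $u_i^{\omega+1}=u^{\omega+1}u_i^\omega$ for each intermediate pseudoword $u_i$, and handles (a) and (b) as separate inductions --- indeed your inductive step for (a) only uses the \emph{atomic} form of (b) for the last insertion (which comes straight from the corollary), so the claimed inseparability of the two families is a slight overstatement, though harmless.
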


\begin{proof}
  (\ref{item:pseudoidentities-for-BGn-1}) Consider an algebraic proof
  of $u\le v$ from~$1\le x^n$, given by a pair of sequences of
  pseudowords $(x_iz_i)_{i=1,\ldots,m}$ and $(y_i)_{i=1,\ldots,m}$.
  Note that, for each $i$, since $1\le y_i$ is a direct consequence of
  $1\le x^n$, there is $t_i$ such that $y_i=t_i^n$. Let $u_i=x_iz_i$
  for $i=1,\ldots,m$ and $u_{m+1}=x_mt_m^nz_m=v$. We prove, by
  induction on~$i$, that
  \begin{equation}
    \label{eq:pseudoidentities-for-BGn-target-1}
    (\pv{BG})_n \text{ satisfies } u_i^{\omega+1}=u^{\omega+1}u_i^\omega.
  \end{equation}
  For $i=m+1$, this gives one of the pseudoidentities
  in~(\ref{item:pseudoidentities-for-BGn-1}), the other one being
  obtained dually.

  Since $u_1=u$, (\ref{eq:pseudoidentities-for-BGn-target-1}) is
  immediate for $i=1$. Suppose that
  (\ref{eq:pseudoidentities-for-BGn-target-1})~holds for a certain
  $i\le m$. Then, in view of Corollary~\ref{c:pseudoidentities-for-BGn}
  and the induction hypothesis, $(\pv{BG})_n$ satisfies the following
  pseudoidentities:
  \begin{align*}
    u_{i+1}^{\omega+1} %
    &=(x_it_i^nz_i)^{\omega+1} %
      =(x_iz_i)^{\omega+1}(x_it_i^nz_i)^\omega %
      =u_i^{\omega+1}u_{i+1}^\omega \\
    &=u^{\omega+1} u_i^\omega u_{i+1}^\omega %
      =u^{\omega+1}(x_iz_i)^\omega(x_it_i^nz_i)^\omega \\
    &=u^{\omega+1}(x_it_i^nz_i)^\omega
      =u^{\omega+1} u_{i+1}^\omega,
  \end{align*}
  which completes the induction step for the proof
  of~(\ref{eq:pseudoidentities-for-BGn-target-1}).

  (\ref{item:pseudoidentities-for-BGn-2}) This can be established by a
  slight modification of the proof
  of~(\ref{item:pseudoidentities-for-BGn-1}), namely by replacing
  (\ref{eq:pseudoidentities-for-BGn-target-1}) by $(\pv{BG})_n$
  satisfies $u_i^\omega=u^\omega u_i^\omega$.
  
  (\ref{item:pseudoidentities-for-BGn-3}) From
  (\ref{item:pseudoidentities-for-BGn-1})
  and~(\ref{item:pseudoidentities-for-BGn-2}), it follows that
  $(\pv{BG})_n$ satisfies the pseudoidentities
  \begin{displaymath}
    v^{\omega+1} %
    =u^{\omega+1}v^\omega %
    =u^{\omega+1}u^\omega v^\omega %
    =u^{\omega+1}.\popQED
  \end{displaymath}
\end{proof}

\section{More general proofs}
\label{sec:proofs}

Let $\Sigma$ be a set of inequalities $u\le v$ with $u$ and~$v$
pseudowords over some finite alphabet. We are interested in allowing
more general proofs of the validity of inequalities in the
pseudovariety $\op\Sigma\cl$ than those considered in
Section~\ref{sec:provable-ineqs}. For simplicity, we fix the finite
set $A$ of variables on which we consider such provable inequalities.
The definitions below extend to the case of inequalities those
previously considered by the authors for pseudoidentities,
see~\cite{Almeida&Klima:2017a}.

For each ordinal $\alpha$, we define recursively a set $\Sigma_\alpha$ of
inequalities over~$A$ as follows:
\begin{itemize}
\item $\Sigma_0$ consists of all diagonal pairs $(w,w)$, with $w\in\Om
  AM$, together with all pairs of the form $(x\varphi(u)y,
  x\varphi(v)y)$ such that $u\le v$ is an inequality from~$\Sigma$,
  say with $u,v\in\Om BM$,
  \begin{math}
    \varphi:\Om BM\to\Om AM
  \end{math}
  is a continuous homomorphism, and $x,y\in\Om AM$;
\item $\Sigma_{2\alpha+1}$ is the transitive closure of the binary
  relation $\Sigma_{2\alpha}$;
\item $\Sigma_{2\alpha+2}$ is the topological closure of the relation
  $\Sigma_{2\alpha+1}$ in the space $\Om AM\times\Om AM$;
\item if $\alpha$~is a limit ordinal, then
  \begin{math}
    \Sigma_\alpha=\bigcup_{\beta<\alpha}\Sigma_\beta.
  \end{math}
\end{itemize}
Note that $\Sigma_1$ consists of the algebraically provable
inequalities and that, if
\begin{math}
  \Sigma_{\alpha+2}=\Sigma_{\alpha},
\end{math}
then $\Sigma_\alpha$ is both transitive and topologically closed, so
that
\begin{math}
  \Sigma_\beta=\Sigma_\alpha
\end{math}
for every ordinal $\beta$ with
$\beta\ge\alpha$. Since \Om AM is a metric space, such a condition
must hold for $\alpha$ at most the least uncountable ordinal (see
\cite[Proposition~3.1]{Almeida&Klima:2017a} for a justification in the
unordered case, which applies equally well to the ordered case).
Hence, the union
\begin{math}
  \tilde{\Sigma}=\bigcup_{\alpha}\Sigma_\alpha
\end{math}
defines a transitive closed binary relation on~\Om AM.

Consider a binary relation $\theta$ on~\Om AM. We say that $\theta$~is
\emph{stable} if
\begin{math}
  (u,v)\in\theta
\end{math}
and $x,y\in\Om AM$ implies
\begin{math}
  (xuy,xvy)\in\theta.
\end{math}
We also say that $\theta$~is \emph{fully
  invariant} if, for every continuous endomorphism $\varphi$ of~$\Om
AM$ and $(u,v)\in\theta$, we have
\begin{math}
  (\varphi(u),\varphi(v))\in\theta.
\end{math}

The next result is the order analog of
\cite[Proposition~3.1]{Almeida&Klima:2017a}, with a proof following
the very same lines, which is therefore omitted.

\begin{Prop}
  \label{p:gen-provable-inequalities}
  The relation $\tilde{\Sigma}$ is a fully invariant closed stable
  quasiorder on~$\Om AM$. For every
  \begin{math}
    (u,v)\in\tilde{\Sigma},
  \end{math}
  the inequality $u\le v$~is valid in~$\op \Sigma\cl$.
\end{Prop}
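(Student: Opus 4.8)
The plan is to establish the three asserted properties of $\tilde\Sigma$ by a transfinite induction on the ordinal stages $\Sigma_\alpha$, verifying that each property is preserved by the three operations (diagonal-plus-substitution generation, transitive closure, and topological closure) and passes to unions at limit stages. First I would treat the stability and full invariance together, since these are \emph{preservation} properties that commute nicely with all the closure operations. The base case $\Sigma_0$ is immediate: the diagonal pairs are trivially stable and fully invariant, while the pairs $(x\varphi(u)y,x\varphi(v)y)$ are built precisely so that multiplying on the left by $x'$ and the right by $y'$ (giving $(x'x\varphi(u)yy',x'x\varphi(v)yy')$) stays in $\Sigma_0$, and applying a continuous endomorphism $\psi$ turns $\varphi$ into $\psi\circ\varphi$, which is again a continuous homomorphism, so $(\psi(x)(\psi\varphi)(u)\psi(y),\psi(x)(\psi\varphi)(v)\psi(y))$ is again of the required form. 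For the inductive step one checks that if a relation is stable then so is its transitive closure (compose the witnessing chains) and so is its topological closure (stability is expressed by continuous maps, so it survives passing to closures in the compact metric space $\Om AM$), and likewise for full invariance, since each continuous endomorphism is a continuous map commuting with multiplication. Limit stages are trivial because a union of stable (resp. fully invariant) relations is stable (resp. fully invariant).

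Next I would address the quasiorder claim, i.e.\ that $\tilde\Sigma$ is reflexive, transitive, and closed. Reflexivity is guaranteed by the diagonal pairs placed in $\Sigma_0$. Closedness and transitivity are exactly the two properties targeted by the closure operations in the recursion: the remark already observed in the excerpt shows that once $\Sigma_{\alpha+2}=\Sigma_\alpha$ the relation $\Sigma_\alpha$ is simultaneously transitive and topologically closed, and that this stabilization occurs at some countable ordinal because $\Om AM$ is metric. Hence $\tilde\Sigma$, being equal to that stabilized $\Sigma_\alpha$, is a closed transitive reflexive relation, i.e.\ a closed quasiorder. Combining with the first paragraph, $\tilde\Sigma$ is a fully invariant closed stable quasiorder, as required.

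For the final assertion, that $(u,v)\in\tilde\Sigma$ implies the inequality $u\le v$ holds in $\op\Sigma\cl$, I would again argue by transfinite induction on the stage at which $(u,v)$ enters. It suffices to observe that the set of pairs $(u,v)$ for which $\op\Sigma\cl$ satisfies $u\le v$ is itself a fully invariant closed stable quasiorder on $\Om AM$ that contains $\Sigma$, and therefore contains every $\Sigma_\alpha$ by minimality of the construction. Concretely: each defining inequality of $\op\Sigma\cl$ is validated through all substitutions $\varphi$ and all contexts $x,\cdot,y$ (handling $\Sigma_0$); validity of inequalities is preserved under transitivity (since the order on a finite ordered monoid is transitive) and under topological closure (since for each fixed finite ordered monoid in $\op\Sigma\cl$ the evaluation map into it is continuous and its order relation is a closed subset, so the set of valid pairs is closed); and unions at limit stages preserve validity.

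The step I expect to be the main obstacle is verifying that \emph{validity in $\op\Sigma\cl$ is topologically closed}, which underlies both the quasiorder closedness used above and the last assertion. One must check that for each finite ordered monoid $(M,\le)$ in $\op\Sigma\cl$ the set of pairs $(u,v)\in\Om AM\times\Om AM$ with $u^M\le v^M$ is closed, where $u^M,v^M$ denote the images under the unique continuous homomorphism $\Om AM\to M$ extending a fixed interpretation; this follows because that homomorphism is continuous, $M$ is finite and hence discrete, and $\le$ is a relation on the finite (discrete) set $M$, so its preimage is closed. Since $\op\Sigma\cl$ is an intersection over all such $M$ of these closed sets, the overall set of valid pairs is closed, completing the argument. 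As noted, this is precisely the order analog of \cite[Proposition~3.1]{Almeida&Klima:2017a}, and the verifications follow the same lines, so the routine diagram chases and metric-space stabilization details can be deferred to that reference.
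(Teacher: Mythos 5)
Your proposal is correct and takes essentially the same route as the paper: the paper omits the proof, stating that it follows the very same lines as the unordered case in \cite[Proposition~3.1]{Almeida&Klima:2017a}, and your transfinite induction (checking that stability, full invariance, and validity in $\op\Sigma\cl$ are preserved by the substitution/context base step, transitive closure, topological closure, and unions at limits, with closedness of validity coming from continuity of evaluation into finite discrete ordered monoids) is precisely that argument adapted to inequalities.
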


The pairs from~$\tilde{\Sigma}$, which are viewed as inequalities, are
said to be \emph{provable from~$\Sigma$}. We also say that a
pseudoidentity $u=v$ is \emph{provable from~$\Sigma$} if so are both
inequalities $u\le v$ and~$v\le u$.

An alternative way of looking at proofs, which is equivalent in the
sense of capturing the same provable inequalities, is to consider a
transfinite sequence of inequalities in which in each step we allow
one of the inequalities of~$\Sigma_0$, we take $u\le w$ if there are
two previous steps of the form $u\le v$ and $v\le w$, or we take $u\le
v$ provided there is a sequence of earlier steps $(u_n\le v_n)_n$ with
$u=\lim u_n$ and $v=\lim v_n$. The last step in such a proof should be
the inequality to be proved.

Several examples of such proofs, can be found in the proofs of
Lemma~\ref{l:BGn} and Proposition~\ref{p:upperbound-1LEQxn}. The
following is the order analog
of~\cite[Conjecture~3.2]{Almeida&Klima:2017a}. Note that ample evidence
for the unordered case is presented in~\cite{Almeida&Klima:2017a}.

\begin{Conjecture}
  \label{cj:provable}
  An inequality $u\le v$ of elements from~\Om AM is provable
  from~$\Sigma$ if and only if $\op \Sigma\cl$ satisfies $u\le v$.
\end{Conjecture}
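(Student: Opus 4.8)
The plan is to establish the nontrivial implication, namely that validity in $\op\Sigma\cl$ forces provability, by a quotient/reflection argument; the reverse implication is already contained in Proposition~\ref{p:gen-provable-inequalities}. Write $\preceq$ for the closed stable quasiorder on $\Om AM$ defined by $u\preceq v$ exactly when $\op\Sigma\cl$ satisfies $u\le v$, so that $\overline{\Omega}_A\op\Sigma\cl=\Om AM/{\preceq}$ is the free pro-$\op\Sigma\cl$ ordered monoid. By the cited proposition we have the inclusion $\tilde\Sigma\subseteq{\preceq}$, and the conjecture asserts the reverse inclusion ${\preceq}\subseteq\tilde\Sigma$.

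First I would form the quotient ordered monoid $T=\Om AM/\tilde\Sigma$, that is, the quotient of $\Om AM$ by the closed congruence $\tilde\Sigma\cap\tilde\Sigma^{-1}$, partially ordered by the relation induced by $\tilde\Sigma$; this is well defined and compatible because $\tilde\Sigma$ is a closed stable quasiorder, and $T$ is a compact ordered monoid topologically generated by the image of $A$. There is a canonical continuous onto morphism of ordered monoids $\gamma\colon T\to\overline{\Omega}_A\op\Sigma\cl$ induced by $\tilde\Sigma\subseteq{\preceq}$, and the desired inclusion ${\preceq}\subseteq\tilde\Sigma$ is equivalent to $\gamma$ being an isomorphism of ordered monoids.

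The next, routine, step verifies that every finite continuous ordered quotient $M$ of $T$ lies in $\op\Sigma\cl$. Given an inequality $u\le v$ of $\Sigma$, say with $u,v\in\Om BM$, and an arbitrary assignment $\psi\colon\Om BM\to M$, I would lift $\psi$ through the surjection $\Om AM\to T\to M$ to a continuous homomorphism $\varphi\colon\Om BM\to\Om AM$ (choosing preimages of the values $\psi(b)$, $b\in B$); then $(\varphi(u),\varphi(v))\in\Sigma_0\subseteq\tilde\Sigma$, so $\varphi(u)\le\varphi(v)$ holds in $T$ and hence $\psi(u)\le\psi(v)$ in $M$. Thus $M$ satisfies $\Sigma$ and so $M\in\op\Sigma\cl$, whence all finite continuous ordered quotients of $T$ already belong to $\op\Sigma\cl$.

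The hard part — and the reason the statement is only a conjecture — is to prove that $T$ is profinite as an ordered monoid, equivalently that $\tilde\Sigma$ coincides with the intersection of all clopen stable quasiorders containing it, equivalently that the finite continuous ordered quotients of $T$ separate its points and reflect its order. Granting this, $T$ would be pro-$\op\Sigma\cl$ by the previous paragraph, the universal property of $\overline{\Omega}_A\op\Sigma\cl$ would supply an inverse to $\gamma$, and ${\preceq}=\tilde\Sigma$ would follow. The obstacle is genuine: the quotient of a profinite monoid by a closed congruence need not be profinite, since a closed stable quasiorder obtained by a transfinite alternation of transitive and topological closures need not be an intersection of clopen ones, and the quotient space may even fail to be zero-dimensional. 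Overcoming it would require exploiting the specific structure of the closure process defining $\tilde\Sigma$ — how topological closure interacts with transitivity over the finitely generated free profinite monoid — to extract a residual-finiteness (or ``compactness of proofs'') property; this is exactly the ordered counterpart of the still-open unordered conjecture of~\cite{Almeida&Klima:2017a}, for which only the partial evidence recalled there is currently available.
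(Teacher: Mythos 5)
The statement you were asked to prove is labelled a \emph{Conjecture}, and the paper offers no proof of it: immediately after stating it, the authors only remark that the conjecture is equivalent to showing that $\tilde{\Sigma}$ is a profinite relation in the sense of \cite[Section~3.1]{Rhodes&Steinberg:2009qt}, i.e.\ that the quotient of \Om AM by the congruence $\tilde\Sigma\cap\tilde\Sigma^{-1}$ is a profinite monoid, and that ``there seems to be no obvious way of establishing such a property.'' Your proposal is an honest and accurate reduction to exactly that same equivalent statement: the easy direction via Proposition~\ref{p:gen-provable-inequalities}, the construction of the quotient ordered monoid $T=\Om AM/\tilde\Sigma$ and the comparison morphism $\gamma$ onto the free pro-$\op\Sigma\cl$ ordered monoid, and the (correct, routine) verification that every finite continuous ordered quotient of $T$ satisfies $\Sigma$ by lifting assignments through the surjection. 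Where you stop --- the profiniteness of $T$, equivalently that $\tilde\Sigma$ is an intersection of clopen stable quasiorders --- is precisely where the paper stops, and your diagnosis of why this is hard (closed quotients of profinite monoids need not be profinite, and the transfinite alternation of transitive and topological closures gives no obvious ``compactness of proofs'') matches the authors' own assessment. So there is no error in what you wrote, but there is also no proof: the key step remains open, for you and for the paper alike, and your text should be presented as a reduction of the conjecture rather than as a proof attempt.
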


Taking into account the analogues of the results
of~\cite[Section~3.8]{Almeida:1994a} for inequalities, the conjecture
is equivalent to showing that $\tilde{\Sigma}$ is a profinite relation
in the sense of~\cite[Section~3.1]{Rhodes&Steinberg:2009qt}, that is
that $\tilde{\Sigma}$ is a closed stable quasiorder such that the
quotient by the congruence obtained by taking the intersection with
the dual of~$\tilde{\Sigma}$ is a profinite monoid. There seems to be
no obvious way of establishing such a property.

\section{Pseudoidentities provable from \texorpdfstring{$1\le
    x^n$}{1<=xn}}
\label{sec:provable-vs-BGn}

The aim of this section is to show that, at least under suitable
hypotheses, pseudoidentities provable from $1\le x^n$ are valid
in~$(\pv{BG})_n$. We start by extending
Proposition~\ref{p:pseudoidentities-for-BGn}.

\begin{Prop}
  \label{p:more-pseudoidentities-for-BGn}
  The statement of Proposition~\ref{p:pseudoidentities-for-BGn}
  remains true if the adverb ``algebraically'' is removed.
\end{Prop}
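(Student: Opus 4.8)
The goal is to upgrade Proposition~\ref{p:pseudoidentities-for-BGn} from algebraic provability (finite sequences of $n$-power insertions) to the full provability relation $\tilde\Sigma$ built in Section~\ref{sec:proofs} for $\Sigma=\{1\le x^n\}$. The plan is to argue by transfinite induction on the ordinals $\alpha$ indexing the construction of $\tilde\Sigma$, showing that for every pair $(u,v)\in\Sigma_\alpha$ the three pseudoidentities of Proposition~\ref{p:pseudoidentities-for-BGn} hold in $(\pv{BG})_n$. It suffices to prove items (\ref{item:pseudoidentities-for-BGn-1}) and (\ref{item:pseudoidentities-for-BGn-2}), since (\ref{item:pseudoidentities-for-BGn-3}) is deduced from them exactly as before; and in fact it is natural to carry along, as the induction hypothesis attached to $(u,v)$, the four assertions that $(\pv{BG})_n$ satisfies $v^{\omega+1}=u^{\omega+1}v^\omega$, $v^{\omega+1}=v^\omega u^{\omega+1}$, $v^\omega=u^\omega v^\omega$, and $v^\omega=v^\omega u^\omega$.

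First I would treat the base case $\Sigma_0$. Diagonal pairs $(w,w)$ are trivial. For a pair $(x\varphi(u)y,x\varphi(v)y)$ coming from an instance of $1\le x^n$, note $\varphi(u)=1$ and $\varphi(v)=t^n$ for some pseudoword $t$, so the pair has the form $(xy,\,xt^ny)$; the required four pseudoidentities are then precisely the content of Corollary~\ref{c:pseudoidentities-for-BGn}, applied with $z=y$. Next I would handle the three inductive constructors. The transitive-closure step $\Sigma_{2\alpha+1}$ is the crux of the algebraic argument and is handled by the same computation as in the proof of Proposition~\ref{p:pseudoidentities-for-BGn}: given $(u,w)$ obtained from $(u,v)$ and $(v,w)$, one chains the hypotheses $v^{\omega+1}=u^{\omega+1}v^\omega$ and $w^{\omega+1}=v^{\omega+1}w^\omega$ through Corollary~\ref{c:pseudoidentities-for-BGn} to obtain $w^{\omega+1}=u^{\omega+1}w^\omega$, and dually for the other three assertions. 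The stability needed to form intermediate products is exactly what Corollary~\ref{c:pseudoidentities-for-BGn} supplies. The limit-ordinal step is immediate, as $\Sigma_\alpha=\bigcup_{\beta<\alpha}\Sigma_\beta$ and the property is inherited from each $\Sigma_\beta$.

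The step that does require genuine new input, and which I expect to be the main obstacle, is the topological-closure constructor $\Sigma_{2\alpha+2}$. Here $(u,v)$ is a limit of pairs $(u_k,v_k)\in\Sigma_{2\alpha+1}$ with $u_k\to u$ and $v_k\to v$ in $\Om AM$. The four target pseudoidentities each assert the equality of two continuous maps evaluated at the pair; for instance $v^{\omega+1}=u^{\omega+1}v^\omega$ is the equality of the values of the continuous operations $(s,t)\mapsto s^{\omega+1}$ and $(s,t)\mapsto t^{\omega+1}s^\omega$ at $(u,v)$. Since the $\omega$-power and multiplication are continuous on the free profinite monoid, and since $(\pv{BG})_n$ is a pseudovariety so that ``$(\pv{BG})_n$ satisfies $p=q$'' is a closed condition on $(p,q)\in\Om AM\times\Om AM$, the validity of each target pseudoidentity passes to the limit: the set of pairs on which a given pseudoidentity holds in $(\pv{BG})_n$ is closed, it contains every $(u_k,v_k)$ by the induction hypothesis, hence it contains $(u,v)$. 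I would state this continuity-plus-closedness argument carefully, as it is the only place where the passage from finite sequences to arbitrary provable inequalities genuinely uses the profinite topology rather than pure algebra. Having verified all four constructors, transfinite induction gives the conclusion for every $(u,v)\in\tilde\Sigma$, and item (\ref{item:pseudoidentities-for-BGn-3}) follows as in Proposition~\ref{p:pseudoidentities-for-BGn}.
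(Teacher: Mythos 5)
Your proof is correct and follows essentially the same route as the paper's: a transfinite induction over the ordinals $\alpha$ in the construction of $\tilde\Sigma$, with the base and transitivity cases reduced to the computations of Proposition~\ref{p:pseudoidentities-for-BGn} via Corollary~\ref{c:pseudoidentities-for-BGn}, the topological-closure step handled by continuity of the $\omega$-power and multiplication together with closedness of the satisfaction relation, and the limit-ordinal step being immediate. Your decision to carry all four assertions of parts~(\ref{item:pseudoidentities-for-BGn-1}) and~(\ref{item:pseudoidentities-for-BGn-2}) simultaneously through the induction is a sound (indeed, necessary) way to make the transitivity step close up, a point the paper's proof leaves implicit by declaring that case ``already essentially handled.''
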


\begin{proof}
  We only handle the analogue of
  part~(\ref{item:pseudoidentities-for-BGn-1})
  as~(\ref{item:pseudoidentities-for-BGn-2}) is similar and the proof
  of (\ref{item:pseudoidentities-for-BGn-3})~does not require any
  changes. So, let $\Sigma$ consist of the single inequality $1\le
  x^n$ and consider a finite alphabet and the binary relations
  $\Sigma_\alpha$ over~\Om AM. We prove by transfinite induction
  on~$\alpha$ that, whenever
  \begin{math}
    (u,v)\in\Sigma_\alpha,
  \end{math}
  $(\pv{BG})_n$ satisfies the pseudoidentity
  \begin{math}
    u^{\omega+1}v^\omega=v^{\omega+1}.
  \end{math}
  The cases of $\alpha=0$ and $\alpha=2\beta+1$, that is, respectively
  inequalities of the form
  \begin{math}
    xz\le xy^nz
  \end{math}
  or that obtained from inequalities from~$\Sigma_{2\beta}$ by
  transitivity, are already essentially handled in the proof of
  Proposition~\ref{p:pseudoidentities-for-BGn}. For the case where
  $\alpha=2\beta+2$, we consider a sequence $(u_k,v_k)_k$
  from~$\Sigma_{2\beta+1}$ converging to the limit $(u,v)$. By the
  induction hypothesis, $(\pv{BG})_n$ satisfies each of the
  pseudoidentities
  \begin{math}
    u_k^{\omega+1}v_k^\omega=v_k^{\omega+1}.
  \end{math}
  Hence, taking limits on both sides, we conclude that $(\pv{BG})_n$
  also satisfies
  \begin{math}
    u^{\omega+1}v^\omega=v^{\omega+1}.
  \end{math}
  Finally, in case $\alpha$~is a limit ordinal, the induction step is
  immediate since
  \begin{math}
    \Sigma_\alpha=\bigcup_{\beta<\alpha}\Sigma_\beta.
  \end{math}
\end{proof}

We say that a pseudoword $w\in\Om AM$ has a certain property
over~$(\pv{BG})_n$ if that property is verified by $\pi(w)$ where
\begin{math}
  \pi:\Om AM\to\Om A{}(\pv{BG})_n
\end{math}
is the unique continuous homomorphism sending each $a\in A$ to itself.

\begin{Thm}
  \label{t:provable-pseudoidentities}
  Let $u=v$ be a pseudoidentity provable from~$1\le x^n$.
  \begin{enumerate}[(a)]
  \item\label{item:t:provable-pseudoidentities-1} If $u$ and $v$ are
    group elements over~$(\pv{BG})_n$, then $(\pv{BG})_n$ satisfies
    the pseudoidentity $u=v$.
  \item\label{item:t:provable-pseudoidentities-2} If there are
    pseudowords $w$ and $z$ such that the pseudoidentity
    $u=(wz)^\omega w$ is valid in~$(\pv{BG})_n$, then so are the
    pseudoidentities
    \begin{math}
      u=v(zw)^\omega=(wz)^\omega v.
    \end{math}
  \item\label{item:t:provable-pseudoidentities-3} If $u$ and $v$ are
    regular over~$(\pv{BG})_n$, then $(\pv{BG})_n$ satisfies the
    pseudoidentity $u=v$.
  \end{enumerate}
\end{Thm}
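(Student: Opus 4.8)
The plan is to derive all three parts from Proposition~\ref{p:more-pseudoidentities-for-BGn} together with the elementary fact that a group element $g$ over $(\pv{BG})_n$ satisfies $g^{\omega+1}=g$. Throughout I would use that, since $u=v$ is provable from $1\le x^n$, so are $u\le v$ and $v\le u$; hence by Proposition~\ref{p:more-pseudoidentities-for-BGn} the pseudovariety $(\pv{BG})_n$ satisfies $u^\omega=v^\omega$ and $u^{\omega+1}=v^{\omega+1}$, and, by the stability of $\tilde{\Sigma}$ (Proposition~\ref{p:gen-provable-inequalities}), for any pseudoword $s$ the pseudoidentities $su=sv$ and $us=vs$ are again provable, so the same two conclusions apply to them. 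Part~(\ref{item:t:provable-pseudoidentities-1}) is then immediate: being group elements over $(\pv{BG})_n$, $u$ and $v$ satisfy $u^{\omega+1}=u$ and $v^{\omega+1}=v$ there, so $u=u^{\omega+1}=v^{\omega+1}=v$ over $(\pv{BG})_n$.

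For part~(\ref{item:t:provable-pseudoidentities-2}) the decisive idea is that, although $u$ itself need not be a group element, multiplying by $z$ produces one. Putting $e=(wz)^\omega$ and $f=(zw)^\omega$, the hypothesis $u=(wz)^\omega w$ gives, over $(\pv{BG})_n$, that $uz=(wz)^{\omega+1}$ and $zu=(zw)^{\omega+1}$ are group elements, with $(uz)^\omega=e$ and $(zu)^\omega=f$. Feeding the provable pseudoidentities $uz=vz$ and $zu=zv$ into Proposition~\ref{p:more-pseudoidentities-for-BGn} and using $(uz)^{\omega+1}=uz$, $(zu)^{\omega+1}=zu$ then yields, over $(\pv{BG})_n$,
\begin{displaymath}
  uz=(vz)^{\omega+1},\quad (vz)^\omega=e,\qquad
  zu=(zv)^{\omega+1},\quad (zv)^\omega=f.
\end{displaymath}
I would next introduce the one-sided inverses $\rho=(wz)^{\omega-1}w$ and $\ell=w(zw)^{\omega-1}$; a direct check gives, over $(\pv{BG})_n$, the identities $uz\,\rho=u=\ell\,zu$, $\ell z=e$, $z\rho=f$, $e\rho=\rho$ and $\ell f=\ell$. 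Absorbing the idempotents then produces
\begin{align*}
  u&=\ell\,zu=\ell(zv)^{\omega+1}=\ell f(zv)=\ell(zv)=(\ell z)v=ev,\\
  u&=uz\,\rho=(vz)^{\omega+1}\rho=(vz)(e\rho)=(vz)\rho=v(z\rho)=vf,
\end{align*}
which are exactly the asserted pseudoidentities $u=(wz)^\omega v=v(zw)^\omega$.

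Finally, part~(\ref{item:t:provable-pseudoidentities-3}) reduces to part~(\ref{item:t:provable-pseudoidentities-2}). As $u$ is regular over $(\pv{BG})_n$, there is a pseudoword $z$ with $uzu=u$; taking $w=u$ makes $uz$ idempotent, so $(wz)^\omega w=u$ over $(\pv{BG})_n$ and part~(\ref{item:t:provable-pseudoidentities-2}) gives $u=(uz)v=v(zu)$, whence $u\le_{\Cl R}v$ and $u\le_{\Cl L}v$. Performing the same construction with the roles of $u$ and $v$ interchanged yields $v\le_{\Cl R}u$ and $v\le_{\Cl L}u$, so $u\mathrel{\Cl R}v$. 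Since $e=uz$ is an idempotent with $e\mathrel{\Cl R}u$, it is $\Cl R$-equivalent to $v$ as well, so $ev=v$; together with $u=ev$ from part~(\ref{item:t:provable-pseudoidentities-2}) this gives $u=v$ over $(\pv{BG})_n$.

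The hard part is part~(\ref{item:t:provable-pseudoidentities-2}), and specifically the realization that a single provable pseudoidentity suffices only after one manufactures the group elements $uz$ and $zu$: the $\omega$-power consequences of Proposition~\ref{p:more-pseudoidentities-for-BGn} are by themselves too weak, as witnessed by the Brandt monoid $B_2$, where $e_{12}$ and $e_{21}$ have equal $\omega$- and $(\omega+1)$-powers yet are distinct. Once this move is in place, the only remaining work is the routine verification of the absorption identities for $\rho$ and $\ell$ and the Green's-relation bookkeeping needed to close part~(\ref{item:t:provable-pseudoidentities-3}).
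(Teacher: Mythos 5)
Your proof is correct and takes essentially the same route as the paper's: part (a) is identical, part (b) is the paper's argument (apply Proposition~\ref{p:more-pseudoidentities-for-BGn} to the provable pseudoidentity $uz=vz$ using that $uz=(wz)^{\omega+1}$ is a group element, then cancel via $(wz)^{\omega-1}w$), with the dual half written out explicitly rather than invoked by symmetry, and part (c) is the same Green's-relations bookkeeping. The only cosmetic difference is your explicit choice $w=u$ with $uzu=u$ in part (c), which the paper leaves implicit in the phrase ``there are pseudowords $w$ and $z$ such that $u=(wz)^\omega w$.''
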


\begin{proof}
  (\ref{item:t:provable-pseudoidentities-1}) By
  Proposition~\ref{p:more-pseudoidentities-for-BGn}, from the
  hypothesis that $u=v$~is provable from $1\le x^n$, we deduce that
  $(\pv{BG})_n$ satisfies
  \begin{math}
    u^{\omega+1}=v^{\omega+1}.
  \end{math}
  But, since we are assuming that $(\pv{BG})_n$ satisfies
  \begin{math}
    u^{\omega+1}=u
  \end{math}
  and
  \begin{math}
    v^{\omega+1}=v,
  \end{math}
  it follows that it also satisfies $u=v$.

  (\ref{item:t:provable-pseudoidentities-2}) From the assumption that
  $u=v$ is provable from~$1\le x^n$ it follows that so is $uz=vz$.
  Part~(\ref{item:t:provable-pseudoidentities-1}) yields that the
  pseudoidentities
  \begin{math}
    (vz)^{\omega+1}=(uz)^{\omega+1}=(wz)^{\omega+1}=uz
  \end{math}
  hold in~$(\pv{BG})_n$. Hence, so do
  \begin{math}
    (uz)^\omega=(vz)^\omega=(wz)^\omega
  \end{math}
  and the following pseudoidentities:
  \begin{align*}
    uz
    &=(uz)^{\omega+1}
      =(vz)^{\omega+1}
      =vz(uz)^\omega
      =vz(wz)^\omega
    \\
    \therefore %
    u
    &=uz\cdot(wz)^{\omega-1}w
      =vz(wz)^\omega\cdot(wz)^{\omega-1}w
      =v(zw)^\omega.
  \end{align*}
  The proof that $(\pv{BG})_n$ satisfies the pseudoidentity
  \begin{math}
    u=(wz)^\omega v
  \end{math}
  is dual.

  (\ref{item:t:provable-pseudoidentities-3}) Since $u$ is regular
  over~$(\pv{BG})_n$, there are pseudowords $w$ and $z$ such that the
  pseudoidentity
  \begin{math}
    u=(wz)^\omega w
  \end{math}
  holds in~$(\pv{BG})_n$. From
  part~(\ref{item:t:provable-pseudoidentities-2}), it follows that $u$
  is both \Cl R and \Cl L below $v$ over~$(\pv{BG})_n$. By symmetry,
  we conclude that $u$ and $v$ lie in the same \Cl H-class
  over~$(\pv{BG})_n$. Since $u$ is \Cl R-equivalent to~$(wz)^\omega$
  over~$(\pv{BG})_n$ and
  \begin{math}
    u=(wz)^\omega v
  \end{math}
  holds in~$(\pv{BG})_n$, so does $u=v$.
\end{proof}

Theorem~\ref{t:provable-pseudoidentities} may be viewed as a hint that
the equality of pseudovarieties
\begin{math}
  \langle\op 1 \le x^n \cl\rangle=(\pv{BG})_n
\end{math}
may hold. Should Conjecture~\ref{cj:provable} hold for the inequality
$1\le x^n$, the evidence for the equality is even more compelling. At
present, we must leave it as an open problem.

Another natural and weaker question is whether
\begin{math}
  \pv J\malcev\op x^n=1\cl
\end{math}
is contained in
\begin{math}
  \langle\op 1\le x^n\cl\rangle.
\end{math}
We have no further partial results in this direction than those that
follow from Theorem~\ref{t:provable-pseudoidentities}.

Other questions worth investigating concerning the pseudovarieties in
Figure~\ref{fig:comparison} involve the corresponding relatively free
profinite monoids. For instance, using the representation theorem for
semidirect products \cite[Theorem~10.2.3]{Almeida:1994a}, the fact
that \Om AJ is countable for every finite set $A$
\cite[Proposition~8.2.1]{Almeida:1994a}, and the local finiteness of
the Burnside pseudovariety~$\op x^n=1\cl$, see \cite{Zelmanov:1991},
we deduce that
\begin{math}
  \Om A{}(\pv J*\op x^n=1\cl)
\end{math}
is also countable in case $A$~is finite. We do not know if a similar
property holds for any of the pseudovarieties
\begin{math}
  \pv J\malcev\op x^n=1\cl$, $(\pv{BG})_n,
\end{math}
or perhaps even
\begin{math}
  \pv B\op x^n=1\cl.
\end{math}

\acknowledgements
\label{sec:ack}

The authors would like to thank the referees for their suggestions and
comments, which contributed to an improved presentation of our
results.

\bibliographystyle{abbrvnat}
\bibliography{sgpabb,ref-sgps}

\end{document}